\shorttitle}
\@nx\MakeUppercase{\the\toks@}}
\patchcmd\@settitle{\uppercasenonmath\@title}{\Large}{}{}
\authors}
\newtheorem{theorem}{Theorem}[section]
\newtheorem{corollary}{Corollary}[section]
\newtheorem{proposition}{Proposition}[section]
\newtheorem{lemma}{Lemma}[section]
\newtheorem{remark}{Remark}[section]
\numberwithin{equation}{section}
\begin{document}
%------------------------
\title[On several new results related to Richard's inequality]{On several new results related to Richard's inequality}
\author[Cristian Conde and Nicu\c sor Minculete] {\large{Cristian Conde and Nicu\c{s}or Minculete}}

\address{$^{[1]}$ Instituto de Ciencias, Universidad Nacional de General Sarmiento and Consejo Nacional de Investigaciones Cientıficas Tecnicas, Argentina.}
\email{\url{cconde@campus.ungs.edu.ar}}

\address{$^{[2]}$ Transilvania University of Bra\c{s}ov, Bra\c{s}ov, 500091, Rom{a}nia.}
\email{\url{minculete.nicusor@unitbv.ro}}

\subjclass[2020]{46C05, 26D15, 26D10.}

\keywords{Inner product space, Cauchy--Schwarz inequality,
Buzano's inequality, Richard's inequality, Ostrowski's inequality}

\date{\today}
%\date{June 24, 2019}
%\author[Kais Feki] {\Large{Kais Feki}$^{1}$}

\maketitle

%------------------------
%----------\thispagestyle{empty}

%~~~~~~~~~~~~~~~~~~~~~~~~~~~~~~~~~~~~~~~~~~~~~~~~~~~~~~~~~~~~~~~~~~~~~~~~~~~~~~~~~~~~~~~~~~~~~~~~~~~~~~~~~~~~~~~~~~~~~~~~~~~~~~~~~~~
\begin{abstract}
The main study of this article is the characterization of Richard's inequality, because it is closely related to Buzano's inequality. Finally, we present a new approach for Richard's inequality, where we use the Selberg operator.
\end{abstract}
%\maketitle
%~~~~~~~~~~~~~~~~~~~~~~~~~~~~~~~~~~~~~~~~~~~~~~~~~~~~~~~~~~~~~~~~~~~~~~~~~~~~~~~~~~~~~~~~~~~~~~~~~~~~~~~~~~~~~~~~~~~~~~~~~~~~~~~~~~~
\section{Introduction}
Lagrange shows the following identity:

\begin{equation}
\left( \overset{n}{\underset{i=1}{\sum }}a_{i}^{2}\right) \left( \overset{n}{%
\underset{i=1}{\sum }}b_{i}^{2}\right) =\left( \overset{n}{\underset{i=1}{%
\sum }}a_{i}b_{i}\right) ^{2}+\underset{1\leq i<j\leq n}{\sum }\left(
a_{i}b_{j}-a_{j}b_{i}\right) ^{2}.   \label{1}
\end{equation}%

A consequence of Lagrange's identity is the classical Cauchy-Schwarz inequality, in discrete case,
which states: if $\mathbf{a}=(a_{1},\ldots ,a_{n})$ and $\mathbf{b}%
=(b_{1},\ldots ,b_{n})$ are two $n-$tuples of real numbers, then

\begin{equation}
\left(a_{1}b_{1}+...+a_{n}b_{n}\right)^2\leq\left( a_{1}^{2}+...+a_{n}^{2}\right) \left(
b_{1}^{2}+...+b_{n}^{2}\right),   \label{2}
\end{equation}%
with equality holding if and only if $\mathbf{a}=\lambda \mathbf{b}$. This
result is called the \textit{Cauchy-Buniakowski-Schwarz inequality}. 

Several refinements of Cauchy-Buniakowski-Schwarz inequality can be found in
some papers (see \cite{2}, \cite{4}, \cite{5} and \cite{11}). We gave one of them: Ostrowski \cite{11}, showed the following:
if $\mathbf{x}=(x_{1},\ldots ,x_{n}),\mathbf{y}=(y_{1},\ldots ,y_{n})$ and $%
\mathbf{z}=(z_{1},\ldots ,z_{n})$ are $n-$tuples of real numbers such that $%
\mathbf{x}$ and $\mathbf{y}$ are not proportional and

\begin{gather}
\overset{n}{\underset{k=1}{\sum }}y_{k}z_{k}=0 \text{ and }\overset{n}{%
\underset{k=1}{\sum }}x_{k}z_{k}=1,\text{ then}  \label{3} \\
\overset{n}{\underset{k=1}{\sum }}y_{k}^{2}\ \diagup \overset{n}{\underset{%
k=1}{\sum }}z_{k}^{2}\leq \overset{n}{\underset{k=1}{\sum }}x_{k}^{2}\overset%
{n}{\underset{k=1}{\sum }}y_{k}^{2}-\left( \overset{n}{\underset{k=1}{\sum }}%
x_{k}y_{k}\right) ^{2}.  \notag
\end{gather}%
 In the framework of an inner product space $\mathcal{X}=\left( \mathcal{X},\left\langle \cdot
,\cdot \right\rangle \right) $ over the field of complex numbers $\mathbb{C}$
or real numbers $\mathbb{R}$, the Cauchy--Schwarz inequality (C-S), is given by the following:

\begin{equation}
\left\vert \left\langle x,y\right\rangle \right\vert \leq \left\Vert
x\right\Vert \cdot \left\Vert y\right\Vert   \label{4}
\end{equation}
for all $x,y\in \mathcal{X}$. The equality holds in \eqref{4} if and only if the vectors
$x$ and $y$ are linearly dependent, i.e., there exists a nonzero constant $\lambda\in\mathbb{C}$ so that $x=\lambda y$.

Buzano \cite{3} proved an extension of the Cauchy-Schwarz inequality, given
by the following:

\begin{equation}
\left\vert \left\langle a,x\right\rangle \left\langle x,b\right\rangle
\right\vert \leq \frac{1}{2}\left\Vert x\right\Vert ^{2}\left( \left\vert
\left\langle a,b\right\rangle \right\vert +\left\Vert a\right\Vert \cdot
\left\Vert b\right\Vert \right),  \label{5}
\end{equation}%
for any $x,a,b\in \mathcal{X}$. For $a=b$ in inequality  \eqref{5} we deduce the Cauchy-Schwarz inequality.

For a real inner space $\mathcal{X}$, Richard \cite{15},
gave the following inequality

\begin{equation}
\left\vert \left\langle a,x\right\rangle \left\langle x,b\right\rangle -%
\frac{1}{2}\left\Vert x\right\Vert ^{2}\left\langle a,b\right\rangle
\right\vert \leq \frac{1}{2}\left\Vert x\right\Vert ^{2}\left\Vert
a\right\Vert \cdot \left\Vert b\right\Vert,   \label{7}
\end{equation}%
for any $x,a,b\in \mathcal{X}$. It is trivial to see that the previous inequality  improves Buzano inequality. 

Precupanu \cite{14} mentioned an inequality related to the Richard inequality, for any $x,a,b\in \mathcal{X}$, we have

\begin{equation}
\frac{-\|a\|\|b\|+\langle a, b\rangle}{2}\leq \frac{\langle a, w\rangle \langle w, b\rangle}{\|w\|^2}+\frac{\langle a, z\rangle \langle z, b\rangle}{\|z\|^2}-2\frac{\langle a, w\rangle \langle w,z\rangle \langle z, b\rangle}{\|w\|^2\|z\|^2} \leq \frac{\|a\|\|b\|+\langle a, b\rangle}{2}, \label{7.1}
\end{equation}
for $\mathcal{X}$ a real inner space.
%\begin{equation}
%\frac{1}{2}\left\Vert x\right\Vert ^{2}\left( \left\vert \left\langle
%a,b\right\rangle \right\vert -\left\Vert a\right\Vert \cdot \left\Vert
%b\right\Vert \right) \leq \left\vert \left\langle a,x\right\rangle
%\left\langle x,b\right\rangle \right\vert \leq \frac{1}{2}\left\Vert
%x\right\Vert ^{2}\left( \left\vert \left\langle a,b\right\rangle \right\vert
%+\left\Vert a\right\Vert \cdot \left\Vert b\right\Vert \right).  
%\label{6}
%\end{equation}%

In \cite{7}, Gavrea proved an extention of Buzano's
inequality in an inner product space and 
 Popa and Ra\c{s}a showed in \cite{13}, that, for any $x,a,b\in \mathcal{X}$, we have

\begin{equation}
\left\vert {\Re}\left( \left\langle a,x\right\rangle \left\langle
x,b\right\rangle -\frac{1}{2}\left\Vert x\right\Vert ^{2}\left\langle
a,b\right\rangle \right) \right\vert \leq \frac{1}{2}\left\Vert x\right\Vert
^{2}\sqrt{\left\Vert a\right\Vert ^{2}\cdot \left\Vert b\right\Vert
^{2}-\left( {\Im}\left\langle a,b\right\rangle \right) ^{2}}, 
\label{8}
\end{equation}%
where $z=\Re(z)+i\Im(z)\in\mathbb{C}$.

In \cite{9}, Lupu and Schwarz gave the following inequality:

\begin{equation}
\left\Vert a\right\Vert ^{2}\left\vert \left\langle b,x\right\rangle
\right\vert^2 +\left\Vert b\right\Vert ^{2}\left\vert \left\langle
x,a\right\rangle \right\vert^2 +\left\Vert x\right\Vert
^{2}\left\vert \left\langle a,b\right\rangle \right\vert^2 \leq \left\Vert a\right\Vert
^{2}\left\Vert b\right\Vert ^{2}\left\Vert x\right\Vert ^{2}+2\left\vert
\left\langle a,b\right\rangle \left\langle b,x\right\rangle \left\langle
x,a\right\rangle \right\vert ,  \label{9}
\end{equation}
for any vectors $x,a,b\in \mathcal{X}$. This inequality gives us another refinement of the (C-S) inequality
\begin{equation*}
0\leq\frac{1}{\|x\|^2}\left(\|a\||\langle b,x\rangle|-\|b\||\langle x,a\rangle|\right)^2\leq \left\Vert
a\right\Vert^2 \cdot \left\Vert b\right\Vert^2-\left\vert \left\langle a,b\right\rangle \right\vert^2,   
\end{equation*}
for any vectors $x,a,b\in \mathcal{X}$, $x\neq 0$.

These inequalities, mentioned above, are applied to the theory of Hilbert $\mathbb{C}^{\ast }-$%
modules over non-commutative $\mathbb{C}^{\ast }-$algebras, see Aldaz \cite%
{1}, Pe\v{c}ari\'{c} and Raji\'{c} \cite{12} and Dragomir \cite{6}, \cite{5}.

The main study of this article is is the characterization of Richard's inequality, because it is closely related to Buzano's inequality. 
In Section 2 we are looking some bounds of the expression $\alpha\langle a,x\rangle \langle x,b\rangle -\beta\|x\|^2\langle a,b\rangle$ which is used in the study of some important inequalities, such as those given by Buzano, Richard, Ostrowski, Dragomir, Khosravi, Drnov\v{s}ek and Moslehian. In Section 3, we present a new approach for Richard's inequality, where we use the Selberg operator. We also give a result
which is the corresponding complex version of Precupanu's inequality.

%===================================================================================================================================
\section{Main results}\label{s2}
First, we look at the expression $\alpha\langle a,x\rangle \langle x,b\rangle -\beta\|x\|^2\langle a,b\rangle$ as the scalar product of two vectors and we give an important identity.
\begin{lemma}
In an inner product space $\mathcal{X}$ over the
field of complex numbers $\mathbb{C}$, we have
\begin{equation}
\|\alpha\langle a,x\rangle x -\beta\|x\|^2a\|^2=\|x\|^2\left(|\langle a,x\rangle|^2|\beta-\alpha|^2+|\beta|^2\left\Vert \|x\|a-\frac{\langle a,x\rangle}{\|x\|}x\right\Vert^{2}\right) \label{10}
\end{equation}
for all $a,x\in \mathcal{X}$, $x\neq 0$, and for every $\alpha,\beta \in \mathbb{C}$.
\end{lemma}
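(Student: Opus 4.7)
The plan is to expand the squared norm on the left-hand side using the sesquilinearity of the inner product, and then algebraically rearrange the result to match the right-hand side. Writing $u = \alpha\langle a,x\rangle x$ and $v = \beta\|x\|^2 a$, the identity $\|u-v\|^2 = \|u\|^2 - \langle u,v\rangle - \langle v,u\rangle + \|v\|^2$ immediately yields
\[
\|u\|^2 = |\alpha|^2|\langle a,x\rangle|^2\|x\|^2,\qquad \|v\|^2 = |\beta|^2\|x\|^4\|a\|^2,
\]
while the cross terms collapse nicely because $\langle a,x\rangle\langle x,a\rangle = |\langle a,x\rangle|^2$, giving $\langle u,v\rangle + \langle v,u\rangle = 2\Re(\alpha\overline{\beta})|\langle a,x\rangle|^2\|x\|^2$.

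Next I would factor a common $\|x\|^2$ out of the resulting expression. The key algebraic manoeuvre is the elementary identity
\[
|\alpha|^2 - 2\Re(\alpha\overline{\beta}) = |\beta-\alpha|^2 - |\beta|^2,
\]
which lets me rewrite the coefficient of $|\langle a,x\rangle|^2$ as $|\beta-\alpha|^2 - |\beta|^2$. After collecting the $|\beta|^2$ terms, the bracket inside becomes
\[
|\beta-\alpha|^2 |\langle a,x\rangle|^2 + |\beta|^2\bigl(\|x\|^2\|a\|^2 - |\langle a,x\rangle|^2\bigr).
\]

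To finish, I would recognise the Cauchy--Schwarz deficit $\|x\|^2\|a\|^2 - |\langle a,x\rangle|^2$ as the squared norm of the orthogonal component of $a$ relative to $x$; a direct expansion of $\bigl\|\,\|x\|a - \tfrac{\langle a,x\rangle}{\|x\|}x\bigr\|^2$ using the same sesquilinearity calculation produces exactly $\|x\|^2\|a\|^2 - |\langle a,x\rangle|^2$. Substituting this identification back in gives the desired formula.

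The proof is essentially a bookkeeping exercise; the only point requiring a little care is the complex-conjugation convention in the cross terms, since one needs to verify that the mixed scalars $\alpha\overline{\beta}$ and $\overline{\alpha}\beta$ combine into $2\Re(\alpha\overline{\beta})$ rather than introducing spurious imaginary parts. Once that is handled, the main obstacle is simply spotting that $|\alpha|^2 - 2\Re(\alpha\overline{\beta})$ can be traded for $|\beta-\alpha|^2 - |\beta|^2$, which is what produces the split between the $|\beta-\alpha|^2$ term and the $|\beta|^2$ term on the right-hand side.
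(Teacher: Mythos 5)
Your proof is correct, but it follows a different route from the paper's. The paper derives the identity by invoking a decomposition formula from an earlier work of one of the authors, namely $\|x+\alpha y\|^{2}=\bigl|\alpha\|y\|+\tfrac{\langle x,y\rangle}{\|y\|}\bigr|^{2}+\bigl\|x-\tfrac{\langle x,y\rangle}{\|y\|^{2}}y\bigr\|^{2}$, into which it substitutes $\alpha\mapsto-\tfrac{\alpha}{\beta}\tfrac{\langle a,x\rangle}{\|x\|^{2}}$, $x\mapsto a$, $y\mapsto x$, and then rescales by $|\beta|^{2}\|x\|^{4}$; because the substitution divides by $\beta$, the paper must treat $\beta=0$ as a separate (trivial) case. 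You instead expand $\|u-v\|^{2}$ directly by sesquilinearity and absorb the cross term via the elementary identity $|\alpha|^{2}-2\Re(\alpha\overline{\beta})=|\beta-\alpha|^{2}-|\beta|^{2}$, then identify the Cauchy--Schwarz deficit $\|x\|^{2}\|a\|^{2}-|\langle a,x\rangle|^{2}$ with the squared norm of the orthogonal component. All of your intermediate computations check out (in particular the cross terms do combine to $2\Re(\alpha\overline{\beta})\|x\|^{2}|\langle a,x\rangle|^{2}$, and the final identification of the deficit is exactly the computation the paper itself records later in its Remark, as equation \eqref{12}). Your route is self-contained, avoids the external reference, and handles $\beta=0$ uniformly without a case split; what it loses is the structural interpretation the paper's identity provides, namely that the left-hand side is literally an orthogonal decomposition of $\alpha\langle a,x\rangle x-\beta\|x\|^{2}a$ into its component along $x$ and its component orthogonal to $x$.
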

\begin{proof}
For $\beta=0$ we obtain equality in relation of the statement. Next, we consider $\beta\neq 0$.
In \cite{9.1} we found the following identity:
\begin{equation*}
\left\Vert x+\alpha y\right\Vert ^{2}
=\left\vert \alpha \left\Vert y\right\Vert +\frac{\left\langle x,y\right\rangle }{\left\Vert y\right\Vert }\right\vert ^{2}+\left\Vert x-\frac{\left\langle x,y\right\rangle }{\left\Vert y\right\Vert ^{2}}y\right\Vert ^{2}
\end{equation*}
for all $x,y\in \mathcal{X}, y\neq 0$, and for every $\alpha \in \mathbb{C}$.

If we replace $\alpha$, $x$ and $y$ by $-\dfrac{\alpha}{\beta}\dfrac{\langle a,x\rangle}{\|x\|^2}$ (because $x\neq 0$), $a$ and $x$ in above identity, then we obtain

\begin{align*}
\left\Vert a-\frac{\alpha}{\beta}\frac{\langle a,x\rangle}{\|x\|^2} x\right\Vert ^{2}
&=\left\vert -\frac{\alpha}{\beta}\frac{\langle a,x\rangle}{\|x\|^2} \left\Vert x\right\Vert +\frac{\left\langle a,x\right\rangle }{\left\Vert x\right\Vert }\right\vert ^{2}+\left\Vert a-\frac{\left\langle a,x\right\rangle }{\left\Vert x\right\Vert ^{2}}x\right\Vert ^{2}\\
&=\frac{\left|\langle a,x\right\rangle|^2|\beta-\alpha|^2}{|\beta|^2\left\Vert x\right\Vert^2 }+\left\Vert a-\frac{\left\langle a,x\right\rangle }{\left\Vert x\right\Vert ^{2}}x\right\Vert ^{2}.
\end{align*}

Therefore, multiplying by $|\beta|^2\|x\|^4$, in above relation, we deduce the identity of the statement.
\end{proof}

\begin{remark}
Taking into account that $|\beta|^2\left\Vert \|x\|a-\dfrac{\langle a,x\rangle}{\|x\|}x\right\Vert^{2}\geq 0$, then, from equality \eqref{10}, we find
\begin{equation}
\|\alpha\langle a,x\rangle x -\beta\|x\|^2a\|\geq\|x\||\langle a,x\rangle||\beta-\alpha|  \label{11}
\end{equation}
for all $a,x\in \mathcal{X}$, and for every $\alpha \in \mathbb{C}$. The case $x=0$ is checked separately.
Since, we have $\left\Vert \|x\|a-\frac{\langle a,x\rangle}{\|x\|}x\right\Vert^{2}=\|a\|^2\|x\|^2-|\langle a,x\rangle|^2$, equality \eqref{10} becomes
\begin{equation}
\|\alpha\langle a,x\rangle x -\beta\|x\|^2a\|^2=\|x\|^2\left(|\alpha-\beta|^2|\langle a,x\rangle|^2+|\beta|^2\|a\|^2\|x\|^2-|\beta|^2|\langle a,x\rangle|^2\right) \label{12}
\end{equation}
for all $a,x\in \mathcal{X}$, and for every $\alpha,\beta \in \mathbb{C}$, with separate verification for the case $x=0$.

For $\alpha=2$ and $\beta=1$ in identity \eqref{10} we obtain the following \cite{9.1}:
\begin{equation}
\left\|\langle a,x\rangle x -\frac{1}{2}\|x\|^2a\right\|=\frac{1}{2}\|x\|^2\|a\|  \label{13}
\end{equation}
for all $a,x\in \mathcal{X}$.

We known the algebraic inequality $pp_1+qq_1\leq\max\{p,q\}(p_1+q_1)$ for all $p,p_1,q,q_1\geq 0$. If we take $p=|\alpha-\beta|^2$, $q=|\beta|^2$, $p_1=|\langle a,x\rangle|^2$ and $q_1=\|a\|^2\|x\|^2-|\langle a,x\rangle|^2$, then we have
\begin{align*}
&\|\alpha\langle a,x\rangle x -\beta\|x\|^2a\|^2\leq\|x\|^2\max\{|\alpha-\beta|^2,|\beta|^2\}\left(|\langle a,x\rangle|^2+\|a\|^2\|x\|^2-|\langle a,x\rangle|^2\right)\\
&=\max\{|\alpha-\beta|^2,|\beta|^2\}\|a\|^2\|x\|^4,
\end{align*}
which is equivalent to
\begin{equation}
\|\alpha\langle a,x\rangle x -\beta\|x\|^2a\|\leq\max\{|\alpha-\beta|,|\beta|\}\|a\|\|x\|^2  \label{14}
\end{equation}
for all $a,x\in \mathcal{X}$ and for every $\alpha,\beta \in \mathbb{C}$.

Also here, it should be mentioned that, combining inequalities \eqref{11} and \eqref{14} we obtain an improvement of the Cauchy--Schwarz inequality, thus
\begin{equation}
|\langle a,x\rangle|\leq\frac{\|\alpha\langle a,x\rangle x -|\beta|\|x\|^2a\|}{|\alpha-\beta|\|x\|}\leq\|a\|\|x\|  \label{15}
\end{equation}
for all $a,x\in \mathcal{X}$, $x\neq 0$ and for every $\alpha,\beta \in \mathbb{C}$ with $|\alpha-\beta|\geq |\beta|>0$.
\end{remark}

\begin{theorem}
In an inner product space $\mathcal{X}$ over the
field of complex numbers $\mathbb{C}$, we have
\begin{equation}
|\alpha\langle a,x\rangle \langle x,b\rangle -\beta\|x\|^2\langle a,b\rangle|\leq\max\{|\beta|,|\alpha-\beta|\}\|x\|^2\|a\|\|b\|  \label{16}
\end{equation}
for all $a,b,x\in \mathcal{X}$ and for every $\alpha,\beta \in \mathbb{C}$.
\end{theorem}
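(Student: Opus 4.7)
The plan is to recognize the left-hand side as an inner product in disguise, then reduce the proof to two previously established facts: the ordinary Cauchy--Schwarz inequality \eqref{4} and the norm bound \eqref{14} derived in the preceding remark.

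First I would observe that, by linearity of the inner product in the first slot,
\begin{equation*}
\alpha\langle a,x\rangle \langle x,b\rangle -\beta\|x\|^2\langle a,b\rangle
=\bigl\langle \alpha\langle a,x\rangle x -\beta\|x\|^2 a,\; b\bigr\rangle ,
\end{equation*}
so the expression we want to bound is simply $\langle u,b\rangle$ with $u:=\alpha\langle a,x\rangle x -\beta\|x\|^2 a$. This algebraic rewriting is the essential trick; once it is in place the result follows by chaining two known estimates.

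Next I would apply the Cauchy--Schwarz inequality \eqref{4} to obtain
\begin{equation*}
|\alpha\langle a,x\rangle \langle x,b\rangle -\beta\|x\|^2\langle a,b\rangle|
=|\langle u,b\rangle|\leq \|u\|\cdot \|b\|.
\end{equation*}
Then I would invoke inequality \eqref{14}, which was proved in the remark following Lemma~2.1 and states precisely that
\begin{equation*}
\|u\|=\|\alpha\langle a,x\rangle x -\beta\|x\|^2 a\|\leq \max\{|\alpha-\beta|,|\beta|\}\|a\|\|x\|^2.
\end{equation*}
Combining the last two displays yields the desired bound \eqref{16}.

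Finally, I would note that one must deal with the degenerate case $x=0$ separately, since inequality \eqref{14} in the remark explicitly excluded $x=0$; but when $x=0$ both sides of \eqref{16} vanish, so the inequality holds trivially. There is no serious obstacle here: the real work was done in Lemma~2.1 and the computations of the remark, and the only content of the theorem is the observation that $\alpha\langle a,x\rangle\langle x,b\rangle-\beta\|x\|^2\langle a,b\rangle$ equals an inner product whose first factor is already controlled in norm by \eqref{14}.
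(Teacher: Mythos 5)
Your proposal is correct and follows essentially the same route as the paper: rewrite the left-hand side as $\langle \alpha\langle a,x\rangle x-\beta\|x\|^2a,\,b\rangle$, apply Cauchy--Schwarz, and then invoke the norm bound \eqref{14}. The only cosmetic difference is that you split off the case $x=0$ while the paper splits off $\beta=0$; both degenerate cases are trivial, so this does not affect the argument.
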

\begin{proof}
For $\beta=0$ the inequality of the statement are true. For $\beta\neq 0$, using the Cauchy--Schwarz inequality and inequality \eqref{14}, we deduce the following:
\begin{align*}
&|\alpha\langle a,x\rangle \langle x,b\rangle -\beta\|x\|^2\langle a,b\rangle|=|\langle\alpha\langle a,x\rangle x -\beta\| x\|^{2} a,b\rangle|\\
&\overset{(C-S)}{\leq} \|\langle\alpha\langle a,x\rangle x -\beta\| x\|^{2} a\|\|b\|\\
&\overset{\textcolor{red}{\eqref{14}}}{\leq}\max\{|\beta|,|\alpha-\beta|\}\|x\|^2\|a\|\|b\|.
\end{align*}
Therefore, the inequality of the statement was proven.
\end{proof}

\begin{remark} 
We take $\beta=1$ in inequality \eqref{16}, thus we show that
\begin{equation}
|\alpha\langle a,x\rangle \langle x,b\rangle -\|x\|^2\langle a,b\rangle|\leq\max\{1,|\alpha-1|\}\|x\|^2\|a\|\|b\|  \label{17}
\end{equation}
for all $a,b,x\in \mathcal{X}$ and for every $\alpha \in \mathbb{C}$. 
This inequality is given by Khosravi, Drnov\v{s}ek and Moslehian \cite{8} as an extension of Buzano's inequality given as a particularization in the context of Hilbert $C^*$--modules. We mentioned the fact that this inequality was studied by Dragomir in \cite{5}, when $|\alpha-1|=1$.

For $\alpha=2$ in relation \eqref{17}, we obtain Richard's inequality. 
\end{remark}

\begin{theorem}
In an inner product space $\mathcal{X}$ over the
field of complex numbers $\mathbb{C}$, we have
\begin{equation}
0\leq\frac{\left\Vert x\right\Vert ^{2}}{\left\Vert b\right\Vert ^{2}}\left| 
\alpha\frac{\left\langle a,x\right\rangle \left\langle x,b\right\rangle }{\left\Vert x\right\Vert ^{2}}-\beta\left\langle a,b\right\rangle \right| ^{2}\leq |\alpha-\beta|^2|\langle a,x\rangle|^2+|\beta|^2\left(
\left\Vert a\right\Vert ^{2}\left\Vert x\right\Vert ^{2}-\left|\left\langle
a,x\right\rangle \right| ^{2}\right)  \label{18}
\end{equation}
for all $a,b,x\in \mathcal{X}$, $b,x\neq 0$, and for every $\alpha,\beta \in \mathbb{C}$.
\end{theorem}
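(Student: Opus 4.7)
The plan is to recognize the left-hand side as a rescaling of the quantity already bounded in the previous theorem, so that the proof reduces to combining Cauchy--Schwarz with identity \eqref{12}. The non-negativity on the left is immediate since it is a modulus squared scaled by positive factors, so only the upper bound needs work.

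First, I would rewrite the expression inside the modulus as a single inner product, using the same trick as in the proof of the preceding theorem:
\[
\alpha\langle a,x\rangle\langle x,b\rangle - \beta\|x\|^2\langle a,b\rangle = \langle\alpha\langle a,x\rangle x - \beta\|x\|^2 a,\, b\rangle.
\]
Then the Cauchy--Schwarz inequality \eqref{4} gives
\[
\bigl|\alpha\langle a,x\rangle\langle x,b\rangle - \beta\|x\|^2\langle a,b\rangle\bigr|^2 \leq \|\alpha\langle a,x\rangle x - \beta\|x\|^2 a\|^2\,\|b\|^2.
\]
Dividing both sides by $\|b\|^2\|x\|^2$ (allowed since $b,x\neq 0$) and pulling the factor $\|x\|^{-4}$ inside the modulus on the left, I obtain
\[
\frac{\|x\|^2}{\|b\|^2}\left|\alpha\frac{\langle a,x\rangle\langle x,b\rangle}{\|x\|^2}-\beta\langle a,b\rangle\right|^2 \leq \frac{\|\alpha\langle a,x\rangle x - \beta\|x\|^2 a\|^2}{\|x\|^2}.
\]

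To finish, I would substitute the exact value of $\|\alpha\langle a,x\rangle x - \beta\|x\|^2 a\|^2$ provided by identity \eqref{12}, which factors out a $\|x\|^2$ and yields
\[
\frac{\|\alpha\langle a,x\rangle x - \beta\|x\|^2 a\|^2}{\|x\|^2} = |\alpha-\beta|^2|\langle a,x\rangle|^2 + |\beta|^2\bigl(\|a\|^2\|x\|^2 - |\langle a,x\rangle|^2\bigr),
\]
which is precisely the right-hand side of \eqref{18}. I would dispense with the trivial case $\beta=0$ separately (where the inequality becomes $|\alpha|^2|\langle a,x\rangle\langle x,b\rangle|^2/(\|x\|^2\|b\|^2)\leq |\alpha|^2|\langle a,x\rangle|^2$, a direct consequence of Cauchy--Schwarz applied to $\langle x,b\rangle$).

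There is no real obstacle here beyond careful bookkeeping: the core content is already packaged in Lemma 2.1 and its identity \eqref{12}, and the step from the previous theorem's estimate is only a rearrangement. The only subtlety is remembering to divide by $\|b\|^2\|x\|^2$ at the right moment so that the surviving factor $\|x\|^{-4}$ can be absorbed into the squared modulus on the left, producing exactly the normalized form appearing in \eqref{18}.
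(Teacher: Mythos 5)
Your proposal is correct and follows essentially the same route as the paper's proof: rewrite the expression as $\langle\alpha\langle a,x\rangle x-\beta\|x\|^{2}a,\,b\rangle$, apply Cauchy--Schwarz, and substitute identity \eqref{12}, then normalize by $\|x\|^{2}\|b\|^{2}$. The separate treatment of $\beta=0$ is harmless but unnecessary, since \eqref{12} already covers that case.
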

\begin{proof}
For $x\neq 0$ and $b\neq 0$, we make the following calculations:
\begin{align*}
&|\alpha\langle a,x\rangle\langle x,b\rangle -\beta\| x\|^{2}\langle a,b\rangle|^2\\
&=|\langle\alpha\langle a,x\rangle x -\beta\| x\|^{2} a,b\rangle|^2\\
&\overset{(C-S)}{\leq}\|\alpha\langle a,x\rangle x -\beta\| x\|^{2} a\|^2\|b\|^2\\
&\overset{\eqref{12}}{=}\|x\|^2\|b\|^2\left(|\alpha-\beta|^2|\langle a,x\rangle|^2+|\beta|^2\left(\|a\|^2\|x\|^2-|\langle a,x\rangle|^2\right)\right).
\end{align*}
It follows that
\begin{align*}
&\|x\|^4\left| 
\alpha\frac{\left\langle a,x\right\rangle \left\langle x,b\right\rangle }{\left\Vert x\right\Vert ^{2}}-\beta\left\langle a,b\right\rangle \right| ^{2}\\
&\leq \|x\|^2\|b\|^2\left(|\alpha-\beta|^2|\langle a,x\rangle|^2+|\beta|^2\left(\|a\|^2\|x\|^2-|\langle a,x\rangle|^2\right)\right).
\end{align*}
But, since $b,x\neq 0$, we dividing by $\|x\|^2\|b\|$ and we deduce the inequality of the statement.
\end{proof}

\begin{corollary}
In an inner product space $\mathcal{X}$ over
the field of complex numbers $\mathbb{C}$, the following inequality
\begin{equation}
0\leq\frac{\left\Vert x\right\Vert ^{2}}{\left\Vert b\right\Vert ^{2}}\left| 
\frac{\left\langle a,x\right\rangle \left\langle x,b\right\rangle }{%
\left\Vert x\right\Vert ^{2}}-\left\langle a,b\right\rangle \right| ^{2}\leq
\left\Vert a\right\Vert ^{2}\left\Vert x\right\Vert ^{2}-|\langle
a,x\rangle| ^{2}   \label{19}
\end{equation}
holds, for all $a,b,x\in \mathcal{X},x\neq 0,$ and $b\neq 0.$
\end{corollary}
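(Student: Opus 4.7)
The plan is direct: this corollary is nothing more than the specialization of the immediately preceding Theorem (inequality \eqref{18}) to the scalars $\alpha=\beta=1$. I would simply substitute these values and observe that the term $|\alpha-\beta|^{2}|\langle a,x\rangle|^{2}$ on the right-hand side vanishes because $|\alpha-\beta|=0$, while $|\beta|^{2}=1$ leaves the quantity $\|a\|^{2}\|x\|^{2}-|\langle a,x\rangle|^{2}$ intact. On the left-hand side the substitution $\alpha=\beta=1$ turns $\alpha\frac{\langle a,x\rangle\langle x,b\rangle}{\|x\|^{2}}-\beta\langle a,b\rangle$ into $\frac{\langle a,x\rangle\langle x,b\rangle}{\|x\|^{2}}-\langle a,b\rangle$, so the middle expression of \eqref{19} is recovered verbatim.

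The leftmost inequality $0\leq \frac{\|x\|^{2}}{\|b\|^{2}}\bigl|\cdots\bigr|^{2}$ is automatic since a modulus squared is non-negative and the scalar $\|x\|^{2}/\|b\|^{2}$ is strictly positive under the standing assumptions $x\neq 0$ and $b\neq 0$; these are exactly the hypotheses under which Theorem \eqref{18} was proved, so the specialization is legitimate.

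There is essentially no obstacle, since the corollary is an instance of the theorem rather than a genuinely new statement. The only conceptual remark worth recording is that the choice $\alpha=\beta=1$ corresponds, through the proof of Theorem \eqref{18}, to applying the Cauchy--Schwarz inequality to the pair of vectors $\langle a,x\rangle x-\|x\|^{2}a$ and $b$, together with identity \eqref{12} in the case $\alpha=\beta$, which reduces the norm of the first vector exactly to $\|x\|^{2}\bigl(\|a\|^{2}\|x\|^{2}-|\langle a,x\rangle|^{2}\bigr)$. This makes transparent why the Cauchy--Schwarz defect $\|a\|^{2}\|x\|^{2}-|\langle a,x\rangle|^{2}$ appears as the upper bound in \eqref{19}.
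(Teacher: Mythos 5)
Your proposal is correct and matches the paper's own proof, which likewise obtains \eqref{19} by specializing Theorem \eqref{18} to $\alpha=\beta$ (the paper takes $\alpha=\beta\neq 0$ and cancels the common factor $|\beta|^{2}$, while you take $\alpha=\beta=1$ directly; these are the same argument). No gaps.
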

\begin{proof}
In relation \eqref{18}, we take $\alpha=\beta\neq 0$ and we deduce the inequality of the statement.
\end{proof}

\begin{remark}
	If we take $%
	\left\langle x,b\right\rangle =0$, in inequality $\left( \ref{19}\right) $,
	then we obtain 
	\begin{equation}\label{GenOst}
	\frac{\left\Vert x\right\Vert ^{2}}{\left\Vert b\right\Vert ^{2}}\left| 
	\left\langle a,b\right\rangle \right| ^{2}\leq
	\left\Vert a\right\Vert ^{2}\left\Vert x\right\Vert ^{2}-|\langle
	a,x\rangle| ^{2} 
	\end{equation}%
	for all $a,b,x\in \mathcal{X},b\neq 0,x\neq 0.$ This inequality was obtained by Dragomir and Go\c sa in \cite{DG}.

In addition, if we consider $\left\langle a,b\right\rangle =1$ (or $|\left\langle a,b\right\rangle| =1$) in inequality \eqref{GenOst}, then
we find the inequality of Ostrowski for inner product spaces over the
field of complex numbers,

\begin{equation}
\frac{\left\Vert x\right\Vert ^{2}}{\left\Vert b\right\Vert ^{2}}\leq
\left\Vert a\right\Vert ^{2}\left\Vert x\right\Vert ^{2}-|\langle
a,x\rangle| ^{2}    \label{20}
\end{equation}%
for all $a,b,x\in \mathcal{X},b\neq 0,x\neq 0.$

The inequality of Ostrowski for inner product spaces over the field of real numbers was studied in \cite{9.1}. It is easy to see that for $a,b,x\in \mathbb{R}^{n}\ $we obtain inequality $%
\left( \ref{3}\right) $.
\end{remark}
\begin{theorem}
In an inner product space $\mathcal{X}$ over the field real or complex numbers, for any vectors $x,a,b\in \mathcal{X}$, $a\neq 0$ and $\alpha,\beta\in\mathbb{C}$, $\alpha\neq\beta$, where $|\alpha-\beta|\leq |\beta|$ with $\beta\neq 0$, we have
\begin{equation}
|\beta|\|x\|^2\|a\|\|b\|-|\alpha\langle a,x\rangle\langle x,b\rangle -\beta\| x\|^{2}\langle a,b\rangle|\geq \frac{\left(|\beta|^2-|\alpha-\beta|^2\right)\|b\||\langle a,x\rangle|^2}{2|\beta|\|a\|}\geq
0.    \label{21}
\end{equation}
\end{theorem}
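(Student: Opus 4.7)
The plan is to start from identity \eqref{12} of the preceding Remark, which gives an exact formula for $\|\alpha\langle a,x\rangle x-\beta\|x\|^2 a\|^2$. Rewriting that identity by grouping the $|\langle a,x\rangle|^2$ terms, I would obtain
\[
|\beta|^2\|x\|^4\|a\|^2-\|\alpha\langle a,x\rangle x-\beta\|x\|^2 a\|^2=\bigl(|\beta|^2-|\alpha-\beta|^2\bigr)\|x\|^2|\langle a,x\rangle|^2,
\]
where the right-hand side is non-negative precisely under the hypothesis $|\alpha-\beta|\le|\beta|$.

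Next, I would factor the left-hand side as a difference of squares and solve for the linear difference:
\[
|\beta|\|x\|^2\|a\|-\|\alpha\langle a,x\rangle x-\beta\|x\|^2 a\|=\frac{\bigl(|\beta|^2-|\alpha-\beta|^2\bigr)\|x\|^2|\langle a,x\rangle|^2}{|\beta|\|x\|^2\|a\|+\|\alpha\langle a,x\rangle x-\beta\|x\|^2 a\|}.
\]
Since $|\alpha-\beta|\le|\beta|$ means $\max\{|\beta|,|\alpha-\beta|\}=|\beta|$, inequality \eqref{14} gives $\|\alpha\langle a,x\rangle x-\beta\|x\|^2 a\|\le|\beta|\|x\|^2\|a\|$, so the denominator is bounded above by $2|\beta|\|x\|^2\|a\|$. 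This yields the clean lower bound
\[
|\beta|\|x\|^2\|a\|-\|\alpha\langle a,x\rangle x-\beta\|x\|^2 a\|\ge\frac{\bigl(|\beta|^2-|\alpha-\beta|^2\bigr)|\langle a,x\rangle|^2}{2|\beta|\|a\|}.
\]

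Finally, I would multiply through by $\|b\|$ and apply the Cauchy--Schwarz inequality in the form
\[
|\alpha\langle a,x\rangle\langle x,b\rangle-\beta\|x\|^2\langle a,b\rangle|=|\langle\alpha\langle a,x\rangle x-\beta\|x\|^2 a,b\rangle|\le\|\alpha\langle a,x\rangle x-\beta\|x\|^2 a\|\cdot\|b\|
\]
to replace the norm difference by the inner-product difference, giving \eqref{21}. The degenerate cases $x=0$ or $\langle a,x\rangle=0$ reduce the right-hand side to $0$ and the inequality follows directly from Cauchy--Schwarz together with $|\beta|\ge|\alpha-\beta|$.

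The only subtle step is recognising that \eqref{12} rearranges into a clean difference of squares; once that is in hand, the rest is just Cauchy--Schwarz and the hypothesis $|\alpha-\beta|\le|\beta|$ to dominate the denominator. No new identity beyond those already in the Lemma and Remark is required.
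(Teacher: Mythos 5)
Your proposal is correct and follows essentially the same route as the paper: both rest on identity \eqref{12}, the difference-of-squares (rationalization) trick, the bound \eqref{14} with $\max\{|\beta|,|\alpha-\beta|\}=|\beta|$ to control the denominator, and the Cauchy--Schwarz inequality. The only difference is cosmetic --- you rationalize at the level of the norm $\|\alpha\langle a,x\rangle x-\beta\|x\|^2a\|$ and apply Cauchy--Schwarz once at the end, whereas the paper rationalizes the inner-product expression $|\langle\alpha\langle a,x\rangle x-\beta\|x\|^2a,b\rangle|$ directly and invokes Cauchy--Schwarz in the numerator --- and your handling of the degenerate cases is adequate.
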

\begin{proof}
If we have $x=0$ or $b=0$, then the relation of the statement is true. For $x\neq 0$ and $b\neq 0$, we make the following calculations:
\begin{align*}
&|\beta|\|x\|^2\|a\|\|b\|-|\alpha\langle a,x\rangle\langle x,b\rangle -\beta\| x\|^{2}\langle a,b\rangle|\\
&=|\beta|\|x\|^2\|a\|\|b\|-|\langle\alpha\langle a,x\rangle x -\beta\| x\|^{2} a,b\rangle|\\
&=\frac{|\beta|^2\|x\|^4\|a\|^2\|b\|^2-|\langle\alpha\langle a,x\rangle x -\beta\| x\|^{2} a,b\rangle|^2}{|\beta|\|x\|^2\|a\|\|b\|+|\langle\alpha\langle a,x\rangle x -\beta\| x\|^{2} a,b\rangle|}\\
&\overset{(C-S)}{\geq}\frac{|\beta|^2\|x\|^4\|a\|^2\|b\|^2-\|\langle\alpha\langle a,x\rangle x -\beta\| x\|^{2} a\|^2\|b\|^2}{|\beta|\|x\|^2\|a\|\|b\|+|\langle\alpha\langle a,x\rangle x -\beta\| x\|^{2} a,b\rangle|}\\
&\overset{\eqref{12}}{=}\frac{|\beta|^2\|x\|^4\|a\|^2\|b\|^2-|\alpha-\beta|^2\|x\|^2|\langle a,x\rangle|^2\|b\|^2-|\beta|^2\|a\|^2\|b\|^2\|x\|^4+|\beta|^2\|x\|^2\|b\|^2|\langle a,x\rangle|^2}{|\beta|\|x\|^2\|a\|\|b\|+|\langle\alpha\langle a,x\rangle x -\beta\| x\|^{2} a,b\rangle|}\\
&=\frac{\left(|\beta|^2-|\alpha-\beta|^2\right)\|x\|^2\|b\|^2|\langle a,x\rangle|^2}{|\beta|\|x\|^2\|a\|\|b\|+|\langle\alpha\langle a,x\rangle x -\beta\| x\|^{2} a,b\rangle|}\\
&\overset{\eqref{14}}{\geq}\frac{\left(|\beta|^2-|\alpha-\beta|^2\right)\|x\|^2\|b\|^2|\langle a,x\rangle|^2}{|\beta|\|x\|^2\|a\|\|b\|+\max\{|\beta|,|\alpha-\beta|\}\|x\|^2|\|a\|\|b\|}=\frac{\left(|\beta|^2-|\alpha-\beta|^2\right)\|x\|^2\|b\|^2|\langle a,x\rangle|^2}{2|\beta|\|x\|^2\|a\|\|b\|}\\
&=\frac{\left(|\beta|-|\alpha-\beta|^2\right)\|b\||\langle a,x\rangle|^2}{2|\beta|\|a\|}.
\end{align*}
Consequently, the inequality of the statement is true.
\end{proof}

\begin{theorem}
In an inner product space $\mathcal{X}$ over the field real or complex numbers, for any nonzero vectors $x,a,b\in \mathcal{X}$ and $\alpha,\beta \in \mathbb{C}$, $\max\{|\alpha-\beta|,|\beta|\}\neq 0$, we have

\begin{align}
&\max\{|\alpha-\beta|,|\beta|\}\left\Vert x\right\Vert ^{2}\left\Vert a\right\Vert \cdot
\left\Vert b\right\Vert -\left\vert\alpha \left\langle a,x\right\rangle
\left\langle x,b\right\rangle -\beta\left\Vert x\right\Vert
^{2}\left\langle a,b\right\rangle \right\vert \nonumber\\
&\geq \frac{A(\alpha,\beta)}{2\max\{|\alpha-\beta|,|\beta|\}\left\Vert x\right\Vert ^{2}\left\Vert a\right\Vert \cdot
\left\Vert b\right\Vert}\geq 0,   \label{22}
\end{align}
where
\begin{equation*}
A(\alpha,\beta)=\left(|\alpha| \left\vert \left\langle a,x\right\rangle \right\vert \left(
\left\Vert x\right\Vert ^{2}\left\Vert b\right\Vert ^{2}-\left\vert
\left\langle x,b\right\rangle \right\vert ^{2}\right)^{\frac{1}{2}} -|\beta|\left\Vert
x\right\Vert ^{2}\left( \left\Vert a\right\Vert ^{2}\left\Vert b\right\Vert
^{2}-\left\vert \left\langle a,b\right\rangle \right\vert ^{2}\right)^{\frac{1}{2}}
\right) ^{2}.
\end{equation*}
\end{theorem}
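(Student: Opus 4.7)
The plan is to refine inequality \eqref{16} by the elementary identity $(p-q)(p+q)=p^{2}-q^{2}$ and then to bound the resulting squared gap from below via a sharpening of the Cauchy--Schwarz step that appears implicitly in the proof of \eqref{16}. Set $u:=\alpha\langle a,x\rangle x-\beta\|x\|^{2}a$ and $L:=\alpha\langle a,x\rangle\langle x,b\rangle-\beta\|x\|^{2}\langle a,b\rangle$, so that $L=\langle u,b\rangle$, and write $M:=\max\{|\alpha-\beta|,|\beta|\}$. Inequality \eqref{16} gives $|L|\leq M\|x\|^{2}\|a\|\|b\|$, and hence
$$
M\|x\|^{2}\|a\|\|b\|-|L|=\frac{M^{2}\|x\|^{4}\|a\|^{2}\|b\|^{2}-|L|^{2}}{M\|x\|^{2}\|a\|\|b\|+|L|}\geq\frac{M^{2}\|x\|^{4}\|a\|^{2}\|b\|^{2}-|L|^{2}}{2M\|x\|^{2}\|a\|\|b\|}.
$$
Therefore it suffices to prove the sharper estimate $M^{2}\|x\|^{4}\|a\|^{2}\|b\|^{2}-|L|^{2}\geq A(\alpha,\beta)$.

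For this I would split the gap as
$$
M^{2}\|x\|^{4}\|a\|^{2}\|b\|^{2}-|L|^{2}=\|b\|^{2}\bigl(M^{2}\|x\|^{4}\|a\|^{2}-\|u\|^{2}\bigr)+\bigl(\|u\|^{2}\|b\|^{2}-|\langle u,b\rangle|^{2}\bigr).
$$
The first summand is nonnegative by inequality \eqref{14}. For the second I would use the standard identity $\|u\|^{2}\|b\|^{2}-|\langle u,b\rangle|^{2}=\|b\|^{2}\|u^{\perp}\|^{2}$, where $u^{\perp}:=u-\tfrac{\langle u,b\rangle}{\|b\|^{2}}b$ is the component of $u$ orthogonal to $b$. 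By linearity of the orthogonal projection onto $b^{\perp}$ one obtains $u^{\perp}=\alpha\langle a,x\rangle x^{\perp}-\beta\|x\|^{2}a^{\perp}$, where $x^{\perp}:=x-\tfrac{\langle x,b\rangle}{\|b\|^{2}}b$ and $a^{\perp}:=a-\tfrac{\langle a,b\rangle}{\|b\|^{2}}b$, and a direct calculation gives $\|x^{\perp}\|^{2}=(\|x\|^{2}\|b\|^{2}-|\langle x,b\rangle|^{2})/\|b\|^{2}$ together with $\|a^{\perp}\|^{2}=(\|a\|^{2}\|b\|^{2}-|\langle a,b\rangle|^{2})/\|b\|^{2}$.

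Expanding $\|u^{\perp}\|^{2}$ and controlling its cross term by the elementary bounds $|\Re z|\leq|z|$ and Cauchy--Schwarz $|\langle x^{\perp},a^{\perp}\rangle|\leq\|x^{\perp}\|\|a^{\perp}\|$, I would obtain the squared difference
$$
\|u^{\perp}\|^{2}\geq\bigl(|\alpha||\langle a,x\rangle|\|x^{\perp}\|-|\beta|\|x\|^{2}\|a^{\perp}\|\bigr)^{2},
$$
which, after inserting the explicit formulas for $\|x^{\perp}\|$ and $\|a^{\perp}\|$, equals $A(\alpha,\beta)/\|b\|^{2}$. Multiplying by $\|b\|^{2}$ and combining with the first summand yields $M^{2}\|x\|^{4}\|a\|^{2}\|b\|^{2}-|L|^{2}\geq A(\alpha,\beta)$, and feeding this back into the chain above delivers the middle inequality in the statement; the right-hand inequality is automatic because $A(\alpha,\beta)$ is a square. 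The main obstacle is the \emph{choice} of orthogonal decomposition: one must project $u$ onto the orthogonal complement of $b$ (rather than of $x$), so that the resulting factors $\|x^{\perp}\|$, $\|a^{\perp}\|$ and $|\langle a,x\rangle|$ recombine into precisely the square defining $A(\alpha,\beta)/\|b\|^{2}$; once this is in place, the remainder is a routine application of Cauchy--Schwarz and the $(p-q)(p+q)$ identity.
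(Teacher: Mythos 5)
Your argument is correct and follows essentially the same route as the paper's proof: both apply the $(p-q)(p+q)$ identity to the gap, reduce matters to the component of $u=\alpha\langle a,x\rangle x-\beta\|x\|^{2}a$ orthogonal to $b$, identify $\|u\|^{2}\|b\|^{2}-|\langle u,b\rangle|^{2}$ with $\|\alpha\langle a,x\rangle x^{\perp}-\beta\|x\|^{2}a^{\perp}\|^{2}\|b\|^{2}$, and finish with the reverse triangle inequality to produce $A(\alpha,\beta)$, using \eqref{14} to control the remaining factor of $\|u\|$ by $\max\{|\alpha-\beta|,|\beta|\}\|x\|^{2}\|a\|$. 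The only difference is bookkeeping (you apply the quotient trick to $M\|x\|^{2}\|a\|\|b\|-|L|$ directly and split the numerator, which incidentally keeps the denominator strictly positive), so no changes are needed.
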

\begin{proof}
For all $x,y\in \mathcal{X},$ and $y\neq 0$, we have the following equality: 
\begin{equation*}
\left\Vert \left\Vert y\right\Vert x-\frac{\left\langle x,y\right\rangle }{%
\left\Vert y\right\Vert }y\right\Vert ^{2}=\left\Vert x\right\Vert
^{2}\left\Vert y\right\Vert ^{2}-\left\vert \left\langle x,y\right\rangle
\right\vert ^{2}.
\end{equation*}
Hence, using the Cauchy-Schwarz inequality to the denominator, we have the relation

\begin{equation*}
\left\Vert x\right\Vert \cdot \left\Vert y\right\Vert -\left\vert
\left\langle x,y\right\rangle \right\vert =\frac{\left\Vert \left\Vert
y\right\Vert x-\frac{\left\langle x,y\right\rangle }{\left\Vert y\right\Vert 
}y\right\Vert ^{2}}{\left\Vert x\right\Vert \cdot \left\Vert y\right\Vert
+\left\vert \left\langle x,y\right\rangle \right\vert }\geq \frac{\left\Vert
\left\Vert y\right\Vert x-\frac{\left\langle x,y\right\rangle }{\left\Vert
y\right\Vert }y\right\Vert ^{2}}{2\left\Vert x\right\Vert \cdot \left\Vert
y\right\Vert }.
\end{equation*}%
In this inequality, we replace $x$ and $y$ by $\alpha\left\langle
a,x\right\rangle x-\beta\left\Vert x\right\Vert ^{2}a$ and $b$ in the above inequality and using the inequality \eqref{14}, i.e. $\|\alpha\langle a,x\rangle x -\beta\|x\|^2a\|\leq\max\{|\alpha-\beta|,|\beta|\}\|a\|\|x\|^2$, implies

\begin{align*}
&\left\Vert \alpha\left\langle
a,x\right\rangle x-\beta\left\Vert x\right\Vert ^{2}a\right\Vert
\left\Vert b\right\Vert -\left\vert \alpha\left\langle a,x\right\rangle
\left\langle x,b\right\rangle -\beta\left\Vert x\right\Vert
^{2}\left\langle a,b\right\rangle \right\vert \\
&\geq \frac{\left\Vert
u-v\right\Vert ^{2}}{2\left\Vert \alpha\left\langle
a,x\right\rangle x-\beta\left\Vert x\right\Vert ^{2}a\right\Vert \left\Vert b\right\Vert }\geq \frac{\left\Vert
u-v\right\Vert ^{2}}{2\max\{|\alpha-\beta|,|\beta|\}\|a\|\|x\|^2},
\end{align*}%
where $u=\alpha\left\langle a,x\right\rangle \left( \left\Vert b\right\Vert x-%
\dfrac{\left\langle x,b\right\rangle }{%
\left\Vert b\right\Vert }b\right) $ and $v=\beta\left\Vert
x\right\Vert ^{2}\left( \left\Vert b\right\Vert a-\dfrac{\left\langle
a,b\right\rangle }{\left\Vert b\right\Vert }b\right) $.

But, we have that $\left\Vert u\right\Vert =|\alpha|\left\vert \left\langle
a,x\right\rangle \right\vert \left( \left\Vert x\right\Vert ^{2}\left\Vert
b\right\Vert ^{2}-\left\vert \left\langle x,b\right\rangle \right\vert
^{2}\right)^{\frac{1}{2}} $ and

$\left\Vert v\right\Vert =|\beta|\left\Vert x\right\Vert ^{2}\left(
\left\Vert a\right\Vert ^{2}\left\Vert b\right\Vert ^{2}-\left\vert
\left\langle a,b\right\rangle \right\vert ^{2}\right)^{\frac{1}{2}}.$

Therefore, since $\left\Vert u-v\right\Vert ^{2}\geq \left( \left\Vert u\right\Vert
-\left\Vert v\right\Vert \right)^{2}$, $u,v\in X$, we obtain the
inequality of the statement.
\end{proof}

\begin{remark}  For $\alpha=1$ and $\beta=\frac{1}{2}$ we obtain an important inequality given in \cite{9.1}, thus
\begin{align}
&\frac{1}{2}\left\Vert x\right\Vert ^{2}\left\Vert a\right\Vert \cdot
\left\Vert b\right\Vert -\left\vert\left\langle a,x\right\rangle
\left\langle x,b\right\rangle -\frac{1}{2}\left\Vert x\right\Vert
^{2}\left\langle a,b\right\rangle \right\vert \nonumber\\
&\geq \frac{A}{\left\Vert x\right\Vert ^{2}\left\Vert a\right\Vert \cdot
\left\Vert b\right\Vert}\geq 0,  \label{23}
\end{align}
where
\begin{equation*}
A=A\left(1,\frac{1}{2}\right)=\left(\left\vert \left\langle a,x\right\rangle \right\vert \left(
\left\Vert x\right\Vert ^{2}\left\Vert b\right\Vert ^{2}-\left\vert
\left\langle x,b\right\rangle \right\vert ^{2}\right)^{\frac{1}{2}} -\frac{1}{2}\left\Vert
x\right\Vert ^{2}\left( \left\Vert a\right\Vert ^{2}\left\Vert b\right\Vert
^{2}-\left\vert \left\langle a,b\right\rangle \right\vert ^{2}\right)^{\frac{1}{2}}
\right) ^{2}.
\end{equation*}
This inequality represents an improvement of Richard's inequality, given thus:
\begin{equation*}
\left\vert \left\langle a,x\right\rangle \left\langle x,b\right\rangle -%
\frac{1}{2}\left\Vert x\right\Vert ^{2}\left\langle a,b\right\rangle
\right\vert \leq \frac{1}{2}\left\Vert x\right\Vert ^{2}\left\Vert
a\right\Vert \cdot \left\Vert b\right\Vert -\frac{A}{\left\Vert x\right\Vert
^{2}\left\Vert a\right\Vert \cdot \left\Vert b\right\Vert }.
\end{equation*}
\end{remark}

\begin{corollary} 
In an inner product space $\mathcal{X}$ over the field real or complex numbers, for any nonzero vectors $x,a,b\in \mathcal{X}$ and $\alpha,\beta \in \mathbb{C}$, $\beta\neq 0$, we have
		\begin{align}
		&\left\Vert x\right\Vert
		^{2}\left(|\beta||\langle a,b\rangle|-\max\{|\alpha-\beta|,|\beta|\}\left\Vert a\right\Vert \cdot
		\left\Vert b\right\Vert\right)+\frac{A(\alpha,\beta)}{2\max\{|\alpha-\beta|,|\beta|\}\left\Vert x\right\Vert ^{2}\left\Vert a\right\Vert \cdot
			\left\Vert b\right\Vert}\nonumber\\
		&\leq|\alpha||\langle a,x\rangle\left\langle x,b\right\rangle|  \nonumber\\
		&\leq \left\Vert x\right\Vert
		^{2}\left(|\beta||\langle a,b\rangle|+\max\{|\alpha-\beta|,|\beta|\}\left\Vert a\right\Vert \cdot
		\left\Vert b\right\Vert\right)-\frac{A(\alpha,\beta)}{2\max\{|\alpha-\beta|,|\beta|\}\left\Vert x\right\Vert ^{2}\left\Vert a\right\Vert \cdot
			\left\Vert b\right\Vert}.    \label{24}
		\end{align}
		\end{corollary}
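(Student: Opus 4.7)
The plan is to derive \eqref{24} as a direct consequence of the theorem that produced \eqref{22}, combined with the reverse triangle inequality for the modulus. Rewriting \eqref{22} isolates the upper bound
\[
\bigl|\alpha\langle a,x\rangle\langle x,b\rangle-\beta\|x\|^2\langle a,b\rangle\bigr|\le\max\{|\alpha-\beta|,|\beta|\}\|x\|^2\|a\|\|b\|-\frac{A(\alpha,\beta)}{2\max\{|\alpha-\beta|,|\beta|\}\|x\|^2\|a\|\|b\|},
\]
and the same theorem guarantees that its right-hand side is nonnegative.

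Next, I would apply the reverse triangle inequality $\bigl||z|-|w|\bigr|\le |z-w|$ to the two complex numbers $z=\alpha\langle a,x\rangle\langle x,b\rangle$ and $w=\beta\|x\|^2\langle a,b\rangle$. Since $|z|=|\alpha|\,|\langle a,x\rangle\langle x,b\rangle|$ and $|w|=|\beta|\,\|x\|^2\,|\langle a,b\rangle|$, this produces a two-sided estimate
\[
-M\le |\alpha|\,|\langle a,x\rangle\langle x,b\rangle|-|\beta|\,\|x\|^2\,|\langle a,b\rangle|\le M,
\]
where $M$ abbreviates the right-hand side of the previous display. Adding $|\beta|\,\|x\|^2\,|\langle a,b\rangle|$ throughout and factoring $\|x\|^2$ out of the terms that carry it gives exactly the double inequality \eqref{24}.

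The step that requires the most care is not a computation but a bookkeeping check: one must verify that the inequality remains valid regardless of the sign of the quantity $|\beta|\,|\langle a,b\rangle|-\max\{|\alpha-\beta|,|\beta|\}\|a\|\|b\|$ and that the $A(\alpha,\beta)$-correction on the lower end of \eqref{24} appears with the correct sign after negation. Since \eqref{22} bounds the modulus above and the reverse triangle inequality is intrinsically two-sided, no case analysis on the phases of $\langle a,x\rangle$, $\langle x,b\rangle$, $\langle a,b\rangle$ is needed. The hypotheses $\beta\neq 0$ and $x,a,b\neq 0$ are used only to ensure that $\max\{|\alpha-\beta|,|\beta|\}\neq 0$ and that every denominator appearing in \eqref{24} is nonzero, so the entire proof reduces to invoking \eqref{22} once and the reverse triangle inequality once.
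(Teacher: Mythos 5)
Your proposal is correct and follows exactly the paper's own route: the authors likewise deduce \eqref{24} from \eqref{22} together with the reverse triangle inequality $\left\vert\,\vert u\vert-\vert v\vert\,\right\vert\leq\vert u-v\vert$ applied to $u=\alpha\langle a,x\rangle\langle x,b\rangle$ and $v=\beta\Vert x\Vert^{2}\langle a,b\rangle$, then add $\vert\beta\vert\Vert x\Vert^{2}\vert\langle a,b\rangle\vert$ throughout. Your additional remarks on the nonnegativity of the bound and on the role of the hypotheses $\beta\neq 0$ and $x,a,b\neq 0$ are accurate and only make explicit what the paper leaves implicit.
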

	\begin{proof}
		By using inequality \eqref{22}, and from the continuity property of the
		modulus, i.e., $\left\vert u -v \right\vert \geq \left\vert
		\left\vert u \right\vert -\left\vert v \right\vert \right\vert $, $u,v \in \mathbb{C},$ we easily  deduce the desired inequality.
		\end{proof}

We obtain from  inequality \eqref{24}  a refinement of Buzano's inequality, as follows.
\begin{proposition}
In an inner product space $\mathcal{X}$ over the field real or complex numbers, for any nonzero vectors $x,a,b\in \mathcal{X}$, we have
			\begin{align}
		&|\langle a,x\rangle\left\langle x,b\right\rangle|  \nonumber\\
		&\leq \left\Vert x\right\Vert
		^{2}\left(\frac 12|\langle a,b\rangle|+\frac 12\left\Vert a\right\Vert \cdot
		\left\Vert b\right\Vert\right)-\frac{1}{\left\Vert x\right\Vert ^{2}\left\Vert a\right\Vert \cdot
			\left\Vert b\right\Vert}\max\left\{A\left(1,\frac{1}{2}\right), \frac14 A\left(2, 1\right)\right\}.    \label{25}
		\end{align}
			\end{proposition}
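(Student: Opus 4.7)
The plan is to apply inequality \eqref{24} twice, with two carefully chosen pairs $(\alpha,\beta)$, and then take the sharper (i.e., smaller) of the two resulting upper bounds for $|\langle a,x\rangle\langle x,b\rangle|$.

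First I would take $(\alpha,\beta) = (1, \tfrac{1}{2})$, which gives $|\alpha|=1$, $|\alpha-\beta|=|\beta|=\tfrac{1}{2}$, hence $\max\{|\alpha-\beta|,|\beta|\}=\tfrac{1}{2}$. Substituting into the right-hand inequality of \eqref{24} and simplifying the factor $2\max\{|\alpha-\beta|,|\beta|\}=1$ in the denominator, this yields
\begin{equation*}
|\langle a,x\rangle\langle x,b\rangle| \leq \|x\|^2\left(\tfrac{1}{2}|\langle a,b\rangle|+\tfrac{1}{2}\|a\|\|b\|\right) - \frac{A(1,\tfrac{1}{2})}{\|x\|^2\|a\|\|b\|}.
\end{equation*}

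Next I would take $(\alpha,\beta)=(2,1)$, so that $|\alpha|=2$, $|\alpha-\beta|=|\beta|=1$, and $\max\{|\alpha-\beta|,|\beta|\}=1$. Again inserting into \eqref{24} and then dividing the resulting inequality by $|\alpha|=2$ to isolate $|\langle a,x\rangle\langle x,b\rangle|$, I obtain
\begin{equation*}
|\langle a,x\rangle\langle x,b\rangle| \leq \|x\|^2\left(\tfrac{1}{2}|\langle a,b\rangle|+\tfrac{1}{2}\|a\|\|b\|\right) - \frac{\tfrac{1}{4}A(2,1)}{\|x\|^2\|a\|\|b\|}.
\end{equation*}

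Both estimates share the same main term $\|x\|^2(\tfrac{1}{2}|\langle a,b\rangle|+\tfrac{1}{2}\|a\|\|b\|)$ and differ only in the nonnegative subtracted correction; thus the tighter upper bound is obtained by subtracting the larger correction, namely $\max\{A(1,\tfrac{1}{2}),\tfrac{1}{4}A(2,1)\}/(\|x\|^2\|a\|\|b\|)$, which is exactly \eqref{25}. There is no genuine obstacle here: the only thing to be careful about is the bookkeeping when $(\alpha,\beta)=(2,1)$, since the factor $|\alpha|=2$ on the left of \eqref{24} must be properly divided through so that the correction term picks up the factor $\tfrac{1}{4}$ rather than $\tfrac{1}{2}$; all other manipulations are direct substitutions and elementary simplifications.
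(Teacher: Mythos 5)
Your proposal is correct and follows essentially the same route as the paper, which obtains \eqref{25} by specializing \eqref{22}/\eqref{24} to $(\alpha,\beta)=(1,\tfrac12)$ and $(\alpha,\beta)=(2,1)$ and keeping the better bound. Your write-up is in fact more explicit than the paper's one-line proof, correctly tracking the division by $|\alpha|=2$ that produces the factor $\tfrac14$ in front of $A(2,1)$.
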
 \begin{proof}
			The refinement of Buzano's inequality follows from inequality \eqref{22}, when $\alpha=1$ and $\beta=\frac{1}{2}$ or $\alpha=2$ and $\beta=1$. 
		\end{proof} 

\section{A new approach for Richard's inequality}
Throughout this section, we denote by $\mathcal{X}$ a complex  Hilbert
space, i.e. an complete and inner product space, where the inner product $\langle\cdot,\cdot\rangle$ and the corresponding norm $\|\cdot\|$ are defined.
 We denote the $C^*$-algebra of all bounded linear operators acting on $\mathcal{X}$ as $\mathcal{B}(\mathcal{X})$ and the identity operator is represented by $I$. The operator norm of $T$ is given by 
 \begin{equation*}
 \|T\|=\sup\{\|Tx\|: \|x\|=1, x\in \mathcal{H}\}=\sup\{|\langle Tx,y\rangle|: \|x\|=\|y\|=1, x,y\in \mathcal{H}\}.
 \end{equation*}

For an operator $T\in \mathcal{B}(\mathcal{\mathcal{X}})$, the nullspace of $T$ is denoted as $\mathcal{N}(T)$, and $T^*$ represents its adjoint. We define a positive operator, denoted as $T\geq 0$, as an operator that satisfies $\langle Tx,x\rangle \geq 0$ for all $x\in \mathcal{H}$. Moreover, the order relation $T\geq S$ is introduced for self-adjoint operators, which holds when $T-S\geq 0$.

Given a bounded linear operator $T$ defined on $\mathcal{X}$, recall that he numerical radius, denoted as $\omega(T)$, is defined as the supremum (or maximum) of the absolute values of the numbers in the numerical range $W(T)$, more precisely
\begin{equation*}
\omega(T)=\sup\{|\lambda|: \lambda \in W(T)\},
\end{equation*}
where
$W(T)=\{\langle Tx, x\rangle : x\in \mathcal{H}, \|x\|=1\}.$

For the subsequent discussion, it is important to recall that the expression $x \otimes y$ represents a rank one operator defined by $x \otimes y(z) = \langle z, y\rangle x$, where $x$, $y$, and $z$ are vectors in the space $\mathcal{X}$.  So, we can rewrite inequality  \eqref{16}, as follows:
\begin{equation}
|\langle[\alpha(x \otimes x)-\beta \|x\|^2 I] a, b\rangle| \leq\max\{|\beta|,|\alpha-\beta|\}\|x\|^2\|a\|\|b\|,  \label{26}
\end{equation}
where $a, b, x\in \mathcal{X}$ and $\alpha, \beta\in \mathbb{C}.$

Taking the supremum in relation \eqref{26} for $\|a\|=\|b\|=1$, we deduce
\begin{equation*}
\|\alpha(x \otimes x)-\beta \|x\|^2 I\|\leq\max\{|\beta|,|\alpha-\beta|\}\|x\|^2.
\end{equation*}

It is well-known that if $x\in \mathcal{X}$ with $\|x\|=1$, then $P_x=x \otimes x$ is the orthogonal projection on ${\rm span}\{x\}.$

\begin{remark}
	If in inequality \eqref{26}, we assume that $\alpha=2, \beta=1$ and $x\in \mathcal{X}$ is a norm one vector, then 
	\begin{equation*}
\|2P_x-I\|=\|2(x \otimes x)- I\|\leq\max\{|1|,|2-1|\}=1.
	\end{equation*}
	Fuji and Kubo \cite{fujiikubo}, used this inequality to give a simpler proof of Buzano's inequality.
	\end{remark}
%In \cite{fujiikubo}, Fujii and Kubo proved that
%\begin{equation*}
%\omega(a \otimes b)=\frac{1}{2}\left( \left\vert
%\left\langle a,b\right\rangle \right\vert +\left\Vert a\right\Vert \cdot\left\Vert b\right\Vert \right). 
%\end{equation*}

%Recalling the formula which express the distance from $T$ to the one-dimensional
%subspace $\mathbb{C}I$ (see \cite{BS}),
%\begin{equation*}
%dist(T, \mathbb{C}I):=\underset{\alpha\in\mathbb{C}}{\min}\|T+\alpha I\|.
%\end{equation*}
%Ando \cite{BS} proved that
%\begin{equation*}
%dist(T, \mathbb{C}I)=\max\{|\langle Tu, v\rangle| : u,v\in \mathcal{X}, \|u\|=\|v\|=1\:{\rm and}\: \langle u, v\rangle = 0\}\leq\|T\|,
%\end{equation*}
%and from \eqref{26} we can establish that 
%\begin{equation*}
%\|\alpha P_x-\beta I\|\leq \max\{|\beta|,|\alpha-\beta|\},
%\end{equation*} where $P_x$ denotes the orthogonal on ${\rm span}\{x\}$ for $x\in \mathcal{X}$ with $\|x\|=1.$   Furthermore, they characterized when the equality holds. 

A significant inequality was discovered by A. Selberg (\cite[p. 394]{MPF}). If we consider vectors $x, z_1, \dots, z_n$ in $\mathcal{X}$, where $z_i \neq 0$ for all $i\in\{1,\dots,n\}$, we can consider Selberg's inequality, which asserts that:
\begin{equation}
\sum_{i=1}^{n}\frac{\left\vert \langle x, z_{i}\rangle \right\vert ^{2}}{%
	\sum_{j=1}^{n}\left\vert \langle z_{i},z_{j}\rangle \right\vert }\leq
\left\Vert x\right\Vert ^{2}. \label{Selberg}
\end{equation}\\
In \cite{ACDF}, was introduced the Selberg operator defined as follows:
	given a subset $\mathcal{Z}=\{z_i: i=1, \cdots, n\}$ of nonzero vectors in the space $\mathcal{X}$, the Selberg operator $S_{\mathcal{Z}}$ is defined by
	\begin{equation*}
	S_{\mathcal{Z}}= \sum_{i=1}^n\frac{z_i \otimes z_i}{ \sum_{j=1}^n |\langle z_i, z_j\rangle|}\in \mathcal{B}(\mathcal{X}).
	\end{equation*}\\
\begin{remark}
			Selberg's inequality gives us another refinement of the (C-S) inequality, since if $a, b\in \mathcal{X}$ with $a$ and $b$ nonzero vectors in $\mathcal{X}$, then 
			\begin{align*}
			0\leq \left(\|a\|^2-\langle S_{\{b\}}a, a\rangle\right)\left(\|b\|^2-\langle S_{\{a\}}b, b\rangle\right)\leq \|b\|^2\left(\|a\|^2-\langle S_{\{b\}}a, a\rangle\right),
			\end{align*}
			or equivalently, 
			\begin{align*}
			0\leq \left(\|a\|^2-\langle S_{\{b\}}a, a\rangle\right)\left(\|b\|^2-\langle S_{\{a\}}b, b\rangle\right)\leq \|b\|^2\|a\|^2-\left\vert \left\langle a,b\right\rangle \right\vert^2.
			\end{align*}
			\end{remark}
Now, we will express Richard's inequality using an appropiate Selberg operator, more precisely 
\begin{equation}\label{RichardSelberg}
\left| \langle S_{\mathcal{Z}} a, b\rangle  -\frac12\langle a, b\rangle\right|\leq \frac{1}{2}\|a\|\|b\|,
\end{equation}
where $\mathcal{Z}=\{x\},$   $x, a, b\in  \mathcal{X}$ and $\|x\|=1.$ \\

Before we point out some generalization of  \eqref{RichardSelberg}, we collect some results recently obtained by one of the authors in \cite{ACDF, BC}.\\
\begin{lemma}\label{previousresults}
	Let $\mathcal{Z}=\{z_i: i=1, \cdots, n\}$ be a subset of nonzero vectors in $\mathcal{X}$, then \begin{enumerate}
		\item $S_{\mathcal{Z}}$ is a positve operator and $\|S_{\mathcal{Z}}\|\leq 1.$
		\item 	$\|2S_{\mathcal{Z}}-I\|\leq 1.$
		\item For any $a, b \in \mathcal{X}$, we have 
		\begin{eqnarray}\label{ineq}
		\|a\| \|b\|&\geq& |\langle a, b\rangle-\langle S_{\mathcal{Z}}a, b\rangle |+\langle S_{\mathcal{Z}}a, a\rangle^{1/2}\langle S_{\mathcal{Z}}b, b\rangle^{1/2}\nonumber \\
		&\geq& |\langle a, b\rangle|-|\langle S_{\mathcal{Z}}a, b\rangle |+\langle S_{\mathcal{Z}}a, a\rangle^{1/2}\langle S_{\mathcal{Z}}b, b\rangle^{1/2}\nonumber \\
		&\geq& |\langle a, b\rangle|.\nonumber \
		\end{eqnarray}
		\end{enumerate}

\end{lemma}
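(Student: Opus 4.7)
The plan is to dispatch the three items in order, with (1) and (2) using Selberg's inequality \eqref{Selberg} together with elementary spectral considerations, and (3) reducing to two applications of the Cauchy--Schwarz inequality for positive operators.

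For item (1), I would first observe that for any $x \in \mathcal{X}$,
\begin{equation*}
\langle S_{\mathcal{Z}} x, x\rangle \;=\; \sum_{i=1}^{n}\frac{|\langle x, z_i\rangle|^2}{\sum_{j=1}^{n} |\langle z_i, z_j\rangle|} \;\geq\; 0,
\end{equation*}
so $S_{\mathcal{Z}}\geq 0$. Selberg's inequality \eqref{Selberg} is precisely the statement $\langle S_{\mathcal{Z}} x, x\rangle \leq \|x\|^{2}$, i.e. $S_{\mathcal{Z}}\leq I$. Since $S_{\mathcal{Z}}$ is positive self-adjoint, this yields $\|S_{\mathcal{Z}}\|=\sup_{\|x\|=1}\langle S_{\mathcal{Z}} x, x\rangle\leq 1$. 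For item (2), from $0\leq S_{\mathcal{Z}}\leq I$ the self-adjoint operator $2S_{\mathcal{Z}}-I$ has spectrum contained in $[-1,1]$, hence $\|2S_{\mathcal{Z}}-I\|\leq 1$; equivalently, one can verify directly that $(2S_{\mathcal{Z}}-I)^{2}=4S_{\mathcal{Z}}(S_{\mathcal{Z}}-I)+I\leq I$.

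For item (3), set $T=I-S_{\mathcal{Z}}$, which is positive by (1). The key rewriting is
\begin{equation*}
\langle a,b\rangle-\langle S_{\mathcal{Z}} a,b\rangle \;=\; \langle Ta,b\rangle,
\end{equation*}
and the Cauchy--Schwarz inequality applied to the positive sesquilinear form $(u,v)\mapsto \langle Tu,v\rangle$ gives $|\langle Ta,b\rangle|\leq \langle Ta,a\rangle^{1/2}\langle Tb,b\rangle^{1/2}$. Applying the same principle to the positive form $\langle S_{\mathcal{Z}}\cdot,\cdot\rangle$ gives $|\langle S_{\mathcal{Z}} a,b\rangle|\leq \langle S_{\mathcal{Z}} a,a\rangle^{1/2}\langle S_{\mathcal{Z}} b,b\rangle^{1/2}$, which is exactly what is needed to conclude the last of the three displayed inequalities via the triangle inequality $|\langle a,b\rangle - \langle S_{\mathcal{Z}} a,b\rangle|\geq |\langle a,b\rangle|-|\langle S_{\mathcal{Z}} a,b\rangle|$.

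To close the chain at the top, I would combine the two Cauchy--Schwarz estimates above with the elementary inequality $\sqrt{p_{1}q_{1}}+\sqrt{p_{2}q_{2}}\leq \sqrt{(p_{1}+p_{2})(q_{1}+q_{2})}$ (itself a Cauchy--Schwarz in $\mathbb{R}^{2}$) applied to
\begin{equation*}
p_{1}=\langle Ta,a\rangle,\quad q_{1}=\langle Tb,b\rangle,\quad p_{2}=\langle S_{\mathcal{Z}} a,a\rangle,\quad q_{2}=\langle S_{\mathcal{Z}} b,b\rangle,
\end{equation*}
so that $p_{1}+p_{2}=\|a\|^{2}$ and $q_{1}+q_{2}=\|b\|^{2}$. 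The main obstacle (a mild one) is recognizing that the right-hand side of the top inequality is \emph{already} in the form where the two Cauchy--Schwarz bounds can be glued by this two-term inequality; once the decomposition $I=T+S_{\mathcal{Z}}$ is exploited, the rest is bookkeeping.
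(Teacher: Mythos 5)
Your proof is correct in all three items. Note that the paper itself gives no proof of this lemma: it is stated as a collection of results imported from the references [ACDF] and [BC] (``we collect some results recently obtained by one of the authors''), so there is no in-paper argument to compare against. Your argument is self-contained and uses exactly the natural ingredients: positivity of $S_{\mathcal{Z}}$ from the explicit formula $\langle S_{\mathcal{Z}}x,x\rangle=\sum_{i}|\langle x,z_i\rangle|^{2}/\sum_{j}|\langle z_i,z_j\rangle|\geq 0$, the identification of Selberg's inequality \eqref{Selberg} with $S_{\mathcal{Z}}\leq I$, the spectral (or algebraic) bound $\|2S_{\mathcal{Z}}-I\|\leq 1$ from $0\leq S_{\mathcal{Z}}\leq I$, and for item (3) the decomposition $I=(I-S_{\mathcal{Z}})+S_{\mathcal{Z}}$ combined with the Cauchy--Schwarz inequality for the two positive sesquilinear forms and the two-term estimate $\sqrt{p_1q_1}+\sqrt{p_2q_2}\leq\sqrt{(p_1+p_2)(q_1+q_2)}$. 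Each step checks out (in particular $4S_{\mathcal{Z}}(S_{\mathcal{Z}}-I)\leq 0$ because the two commuting self-adjoint factors have opposite signs, and the final inequality in the chain reduces precisely to $|\langle S_{\mathcal{Z}}a,b\rangle|\leq\langle S_{\mathcal{Z}}a,a\rangle^{1/2}\langle S_{\mathcal{Z}}b,b\rangle^{1/2}$). This supplies a proof the paper leaves to external sources.
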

Now, we generalize  Richard's inequality for any subset $\mathcal{Z}$ contained in $\mathcal{X}$, and we characterize when the equality holds. 
\\
\begin{theorem}\label{generalizacion}
	For any $a, b \in \mathcal{X}$ and $\mathcal{Z}=\{z_i: i=1, \cdots, n\}$ a subset of nonzero vectors in $\mathcal{X}$, it hold 
	\begin{equation}\label{Richardgeneral}
	\left |\langle S_{\mathcal{Z}}a, b\rangle-\frac{1}{2} \langle a, b\rangle\right|\leq \frac12 \|a\| \|b\|.
	\end{equation}
	The case of equality holds in \eqref{Richardgeneral} if and only if
	\begin{equation}
	S_{\mathcal{Z}}a=\frac12 a+\frac12\frac{\|a\|}{\|b\|}e^{i\theta}b,
	\end{equation} 
	for some $\theta\in [0, 2\pi).$
\end{theorem}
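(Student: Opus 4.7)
My plan is to reduce \eqref{Richardgeneral} to the operator norm bound $\|2S_{\mathcal{Z}}-I\|\leq 1$ from Lemma~\ref{previousresults}(2) combined with the Cauchy--Schwarz inequality. Write
\begin{equation*}
\langle S_{\mathcal{Z}}a,b\rangle-\tfrac12\langle a,b\rangle=\tfrac12\langle (2S_{\mathcal{Z}}-I)a,b\rangle,
\end{equation*}
and then estimate
\begin{equation*}
\left|\tfrac12\langle (2S_{\mathcal{Z}}-I)a,b\rangle\right|\leq \tfrac12\|(2S_{\mathcal{Z}}-I)a\|\,\|b\|\leq\tfrac12\|2S_{\mathcal{Z}}-I\|\,\|a\|\,\|b\|\leq\tfrac12\|a\|\,\|b\|.
\end{equation*}
This yields \eqref{Richardgeneral} in one line, so the bulk of the work is in the equality characterization.

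For the equality case, I will trace which of the three inequalities above must be tight when $\left|\langle S_{\mathcal{Z}}a,b\rangle-\tfrac12\langle a,b\rangle\right|=\tfrac12\|a\|\|b\|$. First, equality in Cauchy--Schwarz forces $(2S_{\mathcal{Z}}-I)a=\mu b$ for some $\mu\in\mathbb{C}$ (the trivial cases $a=0$ or $b=0$ can be handled separately). Second, for the chain to collapse to $\tfrac12\|a\|\|b\|$, we also need $\|(2S_{\mathcal{Z}}-I)a\|=\|a\|$, which combined with $(2S_{\mathcal{Z}}-I)a=\mu b$ gives $|\mu|\,\|b\|=\|a\|$, so $\mu=\frac{\|a\|}{\|b\|}e^{i\theta}$ for some $\theta\in[0,2\pi)$. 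Solving for $S_{\mathcal{Z}}a$ then yields
\begin{equation*}
S_{\mathcal{Z}}a=\tfrac12 a+\tfrac12\frac{\|a\|}{\|b\|}e^{i\theta}b,
\end{equation*}
as required. The converse is a direct substitution: assuming this form,
\begin{equation*}
\langle S_{\mathcal{Z}}a,b\rangle-\tfrac12\langle a,b\rangle=\tfrac12\frac{\|a\|}{\|b\|}e^{i\theta}\|b\|^2=\tfrac12\|a\|\|b\|\,e^{i\theta},
\end{equation*}
whose modulus is $\tfrac12\|a\|\|b\|$.

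The main obstacle I anticipate is the equality analysis: I have to be careful that deducing $\|(2S_{\mathcal{Z}}-I)a\|=\|a\|$ at this particular vector $a$ (rather than the full operator norm being attained) is exactly what is forced by the saturated chain, and I must handle the degenerate situations (such as $a=0$, $b=0$, or $(2S_{\mathcal{Z}}-I)a=0$) so that the parametrization $\mu=\frac{\|a\|}{\|b\|}e^{i\theta}$ remains meaningful. These edge cases are nuisance rather than substance, and once they are absorbed into the statement's hypotheses, the argument splits cleanly into the three-line norm bound and a short bidirectional equality check.
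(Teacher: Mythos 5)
Your proposal is correct and follows essentially the same route as the paper: both reduce \eqref{Richardgeneral} to the bound $\|2S_{\mathcal{Z}}-I\|\leq 1$ of Lemma \ref{previousresults} via Cauchy--Schwarz, and both characterize equality by forcing $(2S_{\mathcal{Z}}-I)a$ to be a multiple of $b$ of the right modulus. Your tracing of which links in the chain must saturate is in fact somewhat more explicit than the paper's rather terse equality discussion, and your flagged degenerate cases ($b=0$, where the stated parametrization is undefined) are a genuine gap in the paper's own statement rather than in your argument.
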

\begin{proof}
The first inequality is a simple consequence of Lemma \ref{previousresults}, but to make this article complete, we have included the proof. \\
	Let $a,b \in \mathcal{X},$ then from  Cauchy–
	Schwarz’s inequality and Lemma \ref{previousresults}, we have 
	\begin{eqnarray}
	\left|\langle S_{\mathcal{Z}}a, b\rangle -\frac{1} {2} \langle a, b\rangle \right|&=&\left|\left\langle \left(S_{\mathcal{Z}}-\frac{1} {2} I\right)a, b\right\rangle \right| 
	\leq \frac{1}{2} \left\|2S_{\mathcal{Z}}-I\right\| \|a\| \|b\| \:\:\: \nonumber\\
	&\leq& \frac{1}{2} \|a\| \|b\|.
	\nonumber\
	\end{eqnarray}
	Then, the equality holds, for $y\neq 0$, if and only if
	\begin{equation}\label{equality}
	S_{\mathcal{Z}}a=\frac12 a+\delta b,
	\end{equation}
	for some $\delta \in \mathbb{C}.$ Using the expression \eqref{equality}, we conclude that $\delta=\frac{\|a\|}{\|b\|}e^{i\theta}$
	for some $\theta\in [0, 2\pi).$
	\end{proof}
Incidentally, if $\{a, b\}$ is linearly dependent, then the equality in \eqref{Richardgeneral},
holds for any subset $\mathcal{Z},$ if  and only if $S_{\mathcal{Z}}b=\frac12(1+e^{i\beta})b$
for some $\beta\in [0, 2\pi)$.
\\

\begin{proposition}\label{eortogonal}
	Let $\mathcal{Z}$ be a finite subset of nonzero vectors in $\mathcal{X}.$ If there exists $e\in \mathcal{Z}^{\perp}$ with $\|e\|=1$, then for any $x, y \in \mathcal{X}$ hold 
	\begin{eqnarray}
	\left |\langle S_{\mathcal{Z}}a, b\rangle-\frac{1}{2} \langle a, b\rangle\right|&\leq& \left |\langle S_{\mathcal{Z}}a, b\rangle-\frac{1}{2} \langle a, b\rangle+\frac12 \langle a, e\rangle \langle e, b\rangle \right|+ \frac12\left| \langle a, e\rangle \langle e, b\rangle \right|\nonumber\\
	&\leq& \frac12 \|a\| \|b\|.
	\end{eqnarray}
\end{proposition}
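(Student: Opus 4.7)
Write $X := \langle S_{\mathcal{Z}} a, b\rangle - \tfrac{1}{2}\langle a, b\rangle$ and $Y := \langle a, e\rangle\langle e, b\rangle = \langle (e\otimes e)a, b\rangle$, so the target reads $|X| \leq |X + \tfrac12 Y| + \tfrac12|Y| \leq \tfrac12\|a\|\|b\|$. The first inequality is immediate from the triangle inequality applied to $X = (X + \tfrac12 Y) - \tfrac12 Y$.

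For the substantive second inequality, the plan is to introduce the one-parameter family $S_t := S_{\mathcal{Z}} + t(e\otimes e)$ for $t \in \mathbb{C}$, so that $\langle (S_t - \tfrac12 I) a, b\rangle = X + tY$. The assumption $e \perp \mathcal{Z}$ is crucial: it forces the three subspaces $V := \overline{\operatorname{span}}(\mathcal{Z})$, $\operatorname{span}(e)$, and $(V \oplus \operatorname{span}(e))^\perp$ to be mutually orthogonal and invariant under both $S_{\mathcal{Z}}$ and $e \otimes e$. On these pieces, $2S_t - I$ reduces respectively to $2S_{\mathcal{Z}} - I$, $(2t - 1)I$, and $-I$. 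Hence its operator norm equals $\max(\|2S_{\mathcal{Z}} - I\|, |2t - 1|, 1)$, which by Lemma~\ref{previousresults}(2) is bounded by $1$ whenever $|2t - 1| \leq 1$. The Cauchy--Schwarz inequality then gives $|X + tY| \leq \tfrac12\|a\|\|b\|$ for every $t$ in this disk.

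To finish, I would optimize over $t$. Parametrizing $t = \tfrac12 + \tfrac{s}{2}$ with $|s| \leq 1$ yields $X + tY = (X + \tfrac12 Y) + \tfrac{s}{2}Y$, and choosing $|s| = 1$ with the phase that aligns $sY$ with $X + \tfrac12 Y$ realizes the maximum $|X + \tfrac12 Y| + \tfrac12|Y|$; the degenerate cases $Y = 0$ and $X + \tfrac12 Y = 0$ can be checked directly.

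The main conceptual obstacle is that the two natural specializations of Theorem~\ref{generalizacion} — to $\mathcal{Z}$ (giving $|X| \leq \tfrac12\|a\|\|b\|$) and to $\mathcal{Z} \cup \{e\}$, where one uses $S_{\mathcal{Z}\cup\{e\}} = S_{\mathcal{Z}} + e\otimes e$ to get $|X + Y| \leq \tfrac12\|a\|\|b\|$ — only cover the endpoints $t = 0, 1$ and yield at best $|X + \tfrac12 Y| \leq \tfrac12\|a\|\|b\|$ after averaging, losing the additive $\tfrac12|Y|$ term. The gain of that extra term comes from extending $t$ to the full complex disk $|2t - 1| \leq 1$ and exploiting the phase freedom, which is legitimate precisely because $e \perp \mathcal{Z}$ provides the requisite orthogonal splitting that keeps $\|2S_t - I\| \leq 1$ throughout the disk.
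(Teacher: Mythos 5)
Your argument is correct, but it is organized differently from the paper's. The paper proves the proposition by first specializing Lemma~\ref{previousresults}(3) to the singleton $\mathcal{Z}_1=\{e\}$ to get the two-term refinement $|\langle u,b\rangle|\leq |\langle u,b\rangle-\langle u,e\rangle\langle e,b\rangle|+|\langle u,e\rangle\langle e,b\rangle|\leq \|u\|\,\|b\|$, and then substituting $u=(S_{\mathcal{Z}}-\tfrac12 I)a$; the hypothesis $e\in\mathcal{Z}^{\perp}$ enters only through $S_{\mathcal{Z}}e=0$ (so that $\langle u,e\rangle=-\tfrac12\langle a,e\rangle$), and the final bound comes from $\|(S_{\mathcal{Z}}-\tfrac12I)a\|\leq\tfrac12\|2S_{\mathcal{Z}}-I\|\,\|a\|\leq\tfrac12\|a\|$. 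You instead perturb the operator itself, forming $S_t=S_{\mathcal{Z}}+t(e\otimes e)$, proving $\|2S_t-I\|\leq 1$ on the disk $|2t-1|\leq 1$ via the orthogonal block decomposition $V\oplus\mathrm{span}(e)\oplus(V\oplus\mathrm{span}(e))^{\perp}$, and then recovering the extra term $\tfrac12|\langle a,e\rangle\langle e,b\rangle|$ by choosing the boundary phase that aligns $\tfrac{s}{2}Y$ with $X+\tfrac12 Y$. The two proofs rest on the same two ingredients ($\|2S_{\mathcal{Z}}-I\|\leq1$ and $e\perp\mathcal{Z}$) and are ultimately equivalent — the paper's cited refinement is itself the statement that $\|I+(s-1)(e\otimes e)\|\leq1$ for $|s|\leq1$ after phase optimization — but yours is self-contained where the paper leans on the previously stated lemma, and it makes explicit \emph{why} the additive gain is available (it is the radius of the admissible disk of perturbations), at the cost of the block-diagonalization bookkeeping. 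One cosmetic point: the operator norm of $2S_t-I$ equals $\max\{\|(2S_{\mathcal{Z}}-I)|_V\|,\ |2t-1|,\ 1\}$ (with the last entry present only if the third summand is nontrivial); this agrees with your formula because $\|2S_{\mathcal{Z}}-I\|=\max\{\|(2S_{\mathcal{Z}}-I)|_V\|,1\}$, so the bound $\leq1$ you need is unaffected.
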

\begin{proof}
	Let $\mathcal{Z}_1=\{e\},$ then  $S_{\mathcal{Z}_1}=e\otimes e$ and by  Lemma \ref{previousresults} we have 
	\begin{equation}\label{refCSDragomir}
	|\langle a,b\rangle|\leq |\langle a,b\rangle-\langle a,e\rangle\langle e,b\rangle|+|\langle a,e\rangle\langle e,b\rangle|\leq \|a\|\|b\|,
	\end{equation}
	for any $a,b \in \mathcal{X}.$ Now, if in the last inequality, which is a refinement of the Cauchy–
	Schwarz’s inequality,  we replace  $a$ by $(S_{\mathcal{Z}}-\frac12I)a$ and we use that $S_{\mathcal{Z}}e=0$, then we obtain

	\begin{eqnarray}
	\left|\left\langle \left(S_{\mathcal{Z}}-\frac12I\right)a,b\right\rangle\right|&\leq& \left|\langle S_{\mathcal{Z}}a, b\rangle-\frac{1}{2} \langle a, b\rangle+\frac12\langle a,e\rangle\left\langle e,b\right\rangle\right|+\frac12\left|\langle a,e\rangle\left\langle e,b\right\rangle\right|\nonumber \\
	&\leq&\frac{1}{2} \left\|2S_{\mathcal{Z}}-I\right\| \|a\| \|b\|\leq\frac{1}{2} \|a\| \|b\| \nonumber \
	\end{eqnarray}
	for any $a, b \in \mathcal{X}.$ 
\end{proof}

\begin{remark}
	Notice that \eqref{refCSDragomir} is also established by Dragomir  in \cite{Dra85}. However, our approach here is different from his.
\end{remark}

We can obtain a  refinement of Richard's inequality, from the previous statement,  by considering the positivity and appropiate set $\mathcal{Z}$.

\begin{corollary}
	For any $x, a, b \in \mathcal{X}$ with $\|x\|=1$, we have
	\begin{align*}
\left\vert \left\langle a,x\right\rangle \left\langle x,b\right\rangle -%
\frac{1}{2}\left\langle a,b\right\rangle
\right\vert  &\leq  \left\vert \left\langle a,x\right\rangle \left\langle x,b\right\rangle -%
\frac{1}{2}\left\langle a,b\right\rangle +\frac12 \left\langle a,e\right\rangle\left\langle e,b\right\rangle
\right\vert+\frac12\left|\langle a,e\rangle\left\langle e,b\right\rangle\right|\nonumber\\
 &\leq \frac{1}{2}\left\Vert
a\right\Vert \cdot \left\Vert b\right\Vert,
	\end{align*}
where $e\in \mathcal{X}$ with $\left\langle x,e\right\rangle=0.$
\end{corollary}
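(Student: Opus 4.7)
The plan is to derive the corollary as a direct specialization of Proposition \ref{eortogonal} to the singleton set $\mathcal{Z} = \{x\}$ with $\|x\|=1$.

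First I would unpack the Selberg operator $S_{\{x\}}$. With a single vector and $|\langle x, x\rangle| = \|x\|^{2} = 1$, the definition collapses to $S_{\{x\}} = x \otimes x$, so $S_{\{x\}}a = \langle a, x\rangle x$ for every $a \in \mathcal{X}$, and hence
\begin{equation*}
\langle S_{\{x\}}a, b\rangle = \langle a, x\rangle \langle x, b\rangle.
\end{equation*}
This is precisely the quantity that appears in Richard's bilinear form $\langle a,x\rangle\langle x,b\rangle - \tfrac{1}{2}\langle a,b\rangle$ on the left-hand side of the claim.

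Next I would match the orthogonality hypothesis. In Proposition \ref{eortogonal} one requires a unit vector $e \in \mathcal{Z}^{\perp}$; for $\mathcal{Z} = \{x\}$ this becomes $\langle x, e\rangle = 0$ together with $\|e\| = 1$, which is what appears (implicitly) in the corollary. Substituting the identity above into the two inequalities of Proposition \ref{eortogonal} produces the stated chain term by term, since the correction $\tfrac{1}{2}\langle a, e\rangle\langle e, b\rangle$ and its modulus $\tfrac{1}{2}|\langle a, e\rangle\langle e, b\rangle|$ already appear in both formulations without modification.

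The only genuine subtlety I foresee is that the corollary does not normalize $e$ explicitly, while Proposition \ref{eortogonal} requires $\|e\| = 1$; without such a normalization the upper bound $\tfrac{1}{2}\|a\|\|b\|$ fails (take $a = b = e$ with $\langle x, e\rangle = 0$ and $\|e\|$ large). In the write-up I would therefore read the corollary with the implicit hypothesis $\|e\| = 1$, or equivalently replace $e$ by $e/\|e\|$ throughout, after which the conclusion is immediate. No nontrivial estimate is required; the entire argument is a bookkeeping specialization of the preceding proposition.
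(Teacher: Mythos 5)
Your proof is correct and is essentially identical to the paper's own argument: the authors likewise take $\mathcal{Z}=\{x\}$, identify $\langle S_{\mathcal{Z}}a,b\rangle=\langle a,x\rangle\langle x,b\rangle$, and invoke Proposition \ref{eortogonal}. Your observation that the hypothesis $\|e\|=1$ must be read into the corollary (it is required by Proposition \ref{eortogonal} and the bound genuinely fails without it) is a valid catch of an omission in the paper's statement.
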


\begin{proof}
		We consider $\mathcal{Z}=\{x\}$, then $e\in \mathcal{Z}^{\perp}$ and by Proposition \ref{eortogonal}, we conclude
			\begin{align*}
				\left\vert \left\langle a,x\right\rangle \left\langle x,b\right\rangle -%
			\frac{1}{2}\left\langle a,b\right\rangle
			\right\vert &=	\left| \langle S_{\mathcal{Z}} a, b\rangle  -\frac12\langle a, b\rangle\right|
		\nonumber\\ &\leq \left\vert \left\langle a,x\right\rangle \left\langle x,b\right\rangle -%
		\frac{1}{2}\left\langle a,b\right\rangle +\frac12 \left\langle a,e\right\rangle\left\langle e,b\right\rangle
		\right\vert+\frac12\left|\langle a,e\rangle\left\langle e,b\right\rangle\right|\nonumber\\
			&\leq \frac{1}{2}\left\Vert
			a\right\Vert \cdot \left\Vert b\right\Vert.
		\end{align*}
		\end{proof}
From Proposition \ref{eortogonal},  we also obtain the following refinement of Buzano type inequality. 
\begin{corollary}
	Let $\mathcal{Z}$ be a finite subset of nonzero vectors in $\mathcal{X}.$ If there exists $e\in \mathcal{Z}^{\perp}$ with $\|e\|=1$, then for any $x, y \in \mathcal{X}$ hold 
	\begin{eqnarray}
	\left |\langle S_{\mathcal{Z}}a, b\rangle\right|&\leq& \left |\langle S_{\mathcal{Z}}a, b\rangle-\frac{1}{2} \langle a, b\rangle+\frac12 \langle a, e\rangle \langle e, b\rangle \right|+\left |\frac12 \langle a, e\rangle \langle e, b\rangle \right|+\frac{1}{2} |\langle a, b\rangle|\nonumber\\
	&\leq& \frac12 \left( |\langle a, b\rangle|+\|a\| \|b\|\right).
	\end{eqnarray}
\end{corollary}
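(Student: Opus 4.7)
My plan is to obtain the statement by a direct application of the triangle inequality together with the bound already established in Proposition \ref{eortogonal}. The idea is that once we write $\langle S_{\mathcal{Z}}a, b\rangle$ as a sum of the three terms that naturally appear on the right-hand side, the first inequality will be automatic, and the second will follow by reusing the same proposition.

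Specifically, I would begin by observing the algebraic identity
\begin{equation*}
\langle S_{\mathcal{Z}}a, b\rangle = \left(\langle S_{\mathcal{Z}}a, b\rangle - \tfrac{1}{2}\langle a, b\rangle + \tfrac{1}{2}\langle a, e\rangle\langle e, b\rangle\right) - \tfrac{1}{2}\langle a, e\rangle\langle e, b\rangle + \tfrac{1}{2}\langle a, b\rangle,
\end{equation*}
which is nothing more than adding and subtracting the two scalar quantities $\tfrac{1}{2}\langle a, b\rangle$ and $\tfrac{1}{2}\langle a, e\rangle\langle e, b\rangle$. Applying the triangle inequality for the modulus on $\mathbb{C}$ to this decomposition yields immediately the first of the two inequalities in the statement.

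For the second inequality, I would invoke Proposition \ref{eortogonal}, which gives
\begin{equation*}
\left|\langle S_{\mathcal{Z}}a, b\rangle - \tfrac{1}{2}\langle a, b\rangle + \tfrac{1}{2}\langle a, e\rangle\langle e, b\rangle\right| + \tfrac{1}{2}\left|\langle a, e\rangle\langle e, b\rangle\right| \leq \tfrac{1}{2}\|a\|\|b\|.
\end{equation*}
Adding $\tfrac{1}{2}|\langle a, b\rangle|$ to both sides of this inequality and combining with the first step concludes the argument.

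There is no real obstacle here: the proof is essentially a bookkeeping exercise. The only subtlety worth noting is that the second inequality in the statement is the classical Buzano-type bound $\tfrac{1}{2}(|\langle a,b\rangle| + \|a\|\|b\|)$, which explains the choice of decomposition; the extra middle terms provide the refinement, since they are nonnegative and are absorbed by the stronger estimate of Proposition \ref{eortogonal}.
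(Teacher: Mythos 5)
Your proposal is correct and follows exactly the route the paper intends: the paper states this corollary without a written proof, noting only that it follows from Proposition \ref{eortogonal}, and your add-and-subtract decomposition followed by the triangle inequality and the second estimate of that proposition is precisely that argument.
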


Using the argument of the proof of Theorem \ref{generalizacion}, with different subsets $\mathcal{Z}$ contained in $\mathcal{X}$,  we get the following result
which is the corresponding complex version of Precupanu’s inequality (see  (\ref{7.1})). 

\begin{proposition}\label{prepa}
	Let $a, b, w, z\in \mathcal{X}$ with $w$ and $z$ nonzero vectors. Then 
	\begin{equation}\label{precupanigen}
	\left|\frac{\langle a, w\rangle \langle w, b\rangle}{\|w\|^2}+\frac{\langle a, z\rangle \langle z, b\rangle}{\|z\|^2}-2\frac{\langle a, w\rangle \langle w,z\rangle \langle z, b\rangle}{\|w\|^2\|z\|^2}-\frac 12\langle a, b\rangle \right|\leq \frac12 \|a\|\|b\|.
	\end{equation}
\end{proposition}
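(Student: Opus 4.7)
The plan is to express the quantity inside the absolute value on the left-hand side of \eqref{precupanigen} as $\langle (T - \tfrac{1}{2} I) a, b\rangle$ for a suitable bounded operator $T$, and then to show that $\|2T - I\| \leq 1$, at which point the conclusion follows from the Cauchy--Schwarz inequality in exactly the same way as in the proof of Theorem \ref{generalizacion}.

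First I would introduce the unit vectors $u = w/\|w\|$ and $v = z/\|z\|$ together with the rank-one orthogonal projections $P_u = u \otimes u$ and $P_v = v \otimes v$. A short computation using $\langle (x \otimes x) a, b\rangle = \langle a, x\rangle \langle x, b\rangle$ shows that the operator
$$T := P_u + P_v - 2 P_v P_u$$
satisfies
$$\langle T a, b\rangle = \frac{\langle a, w\rangle \langle w, b\rangle}{\|w\|^2} + \frac{\langle a, z\rangle \langle z, b\rangle}{\|z\|^2} - 2\frac{\langle a, w\rangle \langle w, z\rangle \langle z, b\rangle}{\|w\|^2\|z\|^2},$$
so the expression inside the absolute value in \eqref{precupanigen} is precisely $\langle (T - \tfrac{1}{2} I) a, b\rangle$.

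The main step, and the only part I expect to require any genuine insight, is the algebraic identity
$$2T - I = -(2P_v - I)(2P_u - I),$$
which is verified by expanding the right-hand side as $4 P_v P_u - 2 P_u - 2 P_v + I$. Spotting this factorization is what makes the whole argument work; everything else is routine. Since $P_u = S_{\{w\}}$ and $P_v = S_{\{z\}}$ are Selberg operators attached to singleton sets, Lemma \ref{previousresults}(2) gives $\|2P_u - I\| \leq 1$ and $\|2P_v - I\| \leq 1$, and submultiplicativity of the operator norm yields $\|2T - I\| \leq 1$.

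To conclude, I would apply Cauchy--Schwarz as in the proof of Theorem \ref{generalizacion}:
$$\left|\langle T a, b\rangle - \tfrac{1}{2}\langle a, b\rangle\right| = \tfrac{1}{2}\bigl|\langle (2T - I) a, b\rangle\bigr| \leq \tfrac{1}{2} \|2T - I\|\,\|a\|\,\|b\| \leq \tfrac{1}{2} \|a\|\,\|b\|,$$
which is exactly \eqref{precupanigen}. In this way the complex Precupanu inequality is obtained by the same Selberg-operator strategy used earlier in the section, applied to the pair of singleton subsets $\{w\}$ and $\{z\}$ of $\mathcal{X}$.
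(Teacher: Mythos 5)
Your proof is correct and follows essentially the same route as the paper: the paper also identifies the expression as $\langle (S_{\{w\}}+S_{\{z\}}-2S_{\{z\}}S_{\{w\}}-\tfrac12 I)a,b\rangle$, uses the factorization $S_{\{w\}}+S_{\{z\}}-2S_{\{z\}}S_{\{w\}}-\tfrac12 I=-2(S_{\{z\}}-\tfrac12 I)(S_{\{w\}}-\tfrac12 I)$ (equivalent to your $2T-I=-(2P_v-I)(2P_u-I)$), and concludes via $\|2S_{\mathcal{Z}}-I\|\leq 1$ and submultiplicativity. The only cosmetic difference is that you phrase the Selberg operators of singletons as the rank-one projections $P_u$, $P_v$, and your factorization has the factors in the order that actually matches the expansion, which is slightly cleaner than the paper's display.
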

\begin{proof}
	We consider the following sets $\mathcal{Z}_1=\{w\}$ and $\mathcal{Z}_2=\{z\}$  contained in $\mathcal{X}.$ Thus, 
	\begin{equation*}
	\frac{\langle a, w\rangle \langle w, b\rangle}{\|w\|^2}+\frac{\langle a, z\rangle \langle z, b\rangle}{\|z\|^2}-2\frac{\langle a, w\rangle \langle w,z\rangle \langle z, b\rangle}{\|w\|^2\|z\|^2}=\langle S_{\mathcal{Z}_1}a, b\rangle+\langle S_{\mathcal{Z}_2}a, b\rangle-2\langle S_{\mathcal{Z}_1}a,  S_{\mathcal{Z}_2}b\rangle.
	\end{equation*}
	
	As $S_{\mathcal{Z}_2}$ is a positive operator and, in particular a selfadjoint operator, we get that 
	\begin{eqnarray}
	\frac{\langle a, w\rangle \langle w, b\rangle}{\|w\|^2}+\frac{\langle a, z\rangle \langle z, b\rangle}{\|z\|^2}-2\frac{\langle a, w\rangle \langle w,z\rangle \langle z, b\rangle}{\|w\|^2\|z\|^2}	&=&\langle (S_{\mathcal{Z}_1}+S_{\mathcal{Z}_2}-2S_{\mathcal{Z}_2}S_{\mathcal{Z}_1}) a, b\rangle. \nonumber \
	\end{eqnarray}
	We remark that 
	$$
	S_{\mathcal{Z}_1}+S_{\mathcal{Z}_2}-2S_{\mathcal{Z}_2}S_{\mathcal{Z}_1}-\frac12 I=(-2)\left(S_{\mathcal{Z}_1}-\frac12 I\right)\left(S_{\mathcal{Z}_2}-\frac12 I\right).
	$$
	Therefore, 
	\begin{eqnarray}
	\left|\bigg\langle \left(S_{\mathcal{Z}_1}+S_{\mathcal{Z}_2}-2S_{\mathcal{Z}_2}S_{\mathcal{Z}_1}-\frac12 I\right) a, b\bigg\rangle\right|&=&\left|\bigg\langle (-2)\left(S_{\mathcal{Z}_1}-\frac12 I\right)\left(S_{\mathcal{Z}_2}-\frac12 I\right) a, b\bigg\rangle\right|\nonumber \\
	&\leq & 2\left\|S_{\mathcal{Z}_1}-\frac12 I\right\| \left\|S_{\mathcal{Z}_2}-\frac12 I\right\| \|a\| \|b\|\nonumber \\
	&\leq& \frac 12 \|a\| \|b\|. \nonumber \
	\end{eqnarray}
\end{proof}

From \eqref{precupanigen}, we get the following generalization of Buzano's inequality. 
\begin{corollary}
	For any $a, b, w, z \in \mathcal{X}$ with $w\neq 0$ and $z\neq 0$, it hold 
	\begin{equation}\label{precupanigen2}
	\left|\frac{\langle a, w\rangle \langle w, b\rangle}{\|w\|^2}+\frac{\langle a, z\rangle \langle z, b\rangle}{\|z\|^2}-2\frac{\langle a, w\rangle \langle w,z\rangle \langle z, b\rangle}{\|w\|^2\|z\|^2}\right|\leq \frac12 (|\langle a, b\rangle|+ \|a\|\|b\|).
	\end{equation}
	
\end{corollary}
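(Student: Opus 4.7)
The statement is essentially a triangle-inequality repackaging of Proposition \ref{prepa}, so my plan is short and direct. Let $E$ denote the Precupanu-type expression on the left-hand side of \eqref{precupanigen2}, namely
\begin{equation*}
E=\frac{\langle a, w\rangle \langle w, b\rangle}{\|w\|^2}+\frac{\langle a, z\rangle \langle z, b\rangle}{\|z\|^2}-2\frac{\langle a, w\rangle \langle w,z\rangle \langle z, b\rangle}{\|w\|^2\|z\|^2}.
\end{equation*}
The first step is to split $E$ as $E=\bigl(E-\tfrac12\langle a,b\rangle\bigr)+\tfrac12\langle a,b\rangle$ and apply the triangle inequality in $\mathbb{C}$ to get
\begin{equation*}
|E|\leq \left|E-\tfrac12\langle a,b\rangle\right|+\tfrac12|\langle a,b\rangle|.
\end{equation*}

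The second step is to invoke Proposition \ref{prepa}, i.e.\ inequality \eqref{precupanigen}, which bounds the first summand on the right by $\tfrac12\|a\|\|b\|$. Combining the two estimates yields
\begin{equation*}
|E|\leq \tfrac12\|a\|\|b\|+\tfrac12|\langle a,b\rangle|=\tfrac12\bigl(|\langle a,b\rangle|+\|a\|\|b\|\bigr),
\end{equation*}
which is precisely \eqref{precupanigen2}.

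There is no real obstacle here since Proposition \ref{prepa} does all the analytic work; the only point to check is that the statement of \eqref{precupanigen} applies under the sole hypotheses $w\neq 0$ and $z\neq 0$, which is exactly the assumption of Proposition \ref{prepa}. Thus the corollary follows immediately from the triangle inequality combined with the previous proposition, in complete analogy with how Buzano's inequality \eqref{5} is recovered from Richard's inequality \eqref{7} by adding and subtracting $\tfrac12\|x\|^2\langle a,b\rangle$.
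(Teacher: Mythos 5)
Your proof is correct and follows exactly the route the paper intends: the corollary is stated as an immediate consequence of Proposition \ref{prepa}, obtained by adding and subtracting $\tfrac12\langle a,b\rangle$ and applying the triangle inequality, which is precisely your argument. No issues.
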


In particular, if $\langle z, b\rangle=0$ in \eqref{precupanigen2},  then we obtain the Buzano's inequality. 

Motivated by the proof of Proposition \ref{prepa}, we obtain the following statement.

\begin{theorem}\label{refCScontraction}
	Let $\mathcal{Z}_1, \cdots, \mathcal{Z}_n$ be finite subsets of nonzero vectors in $\mathcal{X}
,$ then for any $a, b \in \mathcal{X}$  and $z_k\in \mathbb{C}$, $k=1,...,n$,  hold 
	\begin{equation}\label{buzanosuma}
	\left |\bigg\langle \sum_{k=1}^n z_k\left(S_{\mathcal{Z}_k}-\frac12I\right)a, b\bigg\rangle\right|\leq \frac{\sum_{k=1}^n |z_k|}{2} \|a\| \|b\|,
	\end{equation}
	and 
	\begin{equation}\label{Precupanuproduct2}
	\left |\bigg\langle \prod_{k=1}^n z_k\left(S_{\mathcal{Z}_k}-\frac12I\right)a, b\bigg\rangle\right|\leq \frac{\prod_{k=1}^n |z_k|}{2^n} \|a\| \|b\|.
	\end{equation}
	
\end{theorem}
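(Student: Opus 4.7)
The plan is to treat both inequalities uniformly by reducing them to a single operator-norm estimate, namely the bound $\|S_{\mathcal{Z}} - \tfrac12 I\| \leq \tfrac12$, which follows immediately from statement (2) of Lemma~\ref{previousresults}. Once this is in hand, the rest is just Cauchy--Schwarz plus, respectively, the triangle inequality for sums of operators and submultiplicativity of the operator norm for products.

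For \eqref{buzanosuma}, I would first write the inner product as $|\langle T a,b\rangle|$ with $T=\sum_{k=1}^{n} z_k (S_{\mathcal{Z}_k}-\tfrac12 I)$, then apply Cauchy--Schwarz to get $|\langle Ta,b\rangle|\leq \|T\|\,\|a\|\,\|b\|$. The triangle inequality on $\mathcal{B}(\mathcal{X})$ yields
\begin{equation*}
\|T\|\leq \sum_{k=1}^n |z_k|\,\bigl\|S_{\mathcal{Z}_k}-\tfrac12 I\bigr\|\leq \frac{1}{2}\sum_{k=1}^n |z_k|,
\end{equation*}
using Lemma~\ref{previousresults}(2) in the last step, and substituting this back gives exactly the claimed bound.

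For \eqref{Precupanuproduct2}, the same Cauchy--Schwarz step applies with $T=\prod_{k=1}^n z_k (S_{\mathcal{Z}_k}-\tfrac12 I)$. Now I would invoke submultiplicativity of $\|\cdot\|$ on $\mathcal{B}(\mathcal{X})$:
\begin{equation*}
\|T\|\leq \Bigl(\prod_{k=1}^n |z_k|\Bigr)\prod_{k=1}^n \bigl\|S_{\mathcal{Z}_k}-\tfrac12 I\bigr\|\leq \frac{\prod_{k=1}^n |z_k|}{2^n},
\end{equation*}
again by Lemma~\ref{previousresults}(2). Combining with Cauchy--Schwarz yields the stated inequality.

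There is really no serious obstacle here; the argument is essentially the same schema already used in the proof of Proposition~\ref{prepa}, just iterated. The only point worth flagging is that the proof is genuinely short because Lemma~\ref{previousresults}(2) does all the heavy lifting, so I would keep the exposition compact and present both inequalities in parallel rather than duplicating the Cauchy--Schwarz step twice.
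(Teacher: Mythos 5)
Your proposal is correct and follows essentially the same route as the paper, whose proof is just the one-line remark that the result follows from submultiplicativity of the operator norm, the triangle inequality, and Lemma~\ref{previousresults}; you have merely written out the Cauchy--Schwarz step $|\langle Ta,b\rangle|\leq\|T\|\,\|a\|\,\|b\|$ and the bound $\bigl\|S_{\mathcal{Z}_k}-\tfrac12 I\bigr\|\leq\tfrac12$ explicitly. No changes needed.
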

\begin{proof}
	It is consequence of the  fact that $\|\cdot\|$ is a submultiplicative norm on $\mathcal{B}(\mathcal(X))$, the triangle inequality  and Lemma \ref{previousresults}. 
\end{proof}

The inequality \eqref{Precupanuproduct2} is a generalization of Precupanu's inequality. 

In particular, if $(-1)^n\prod_{k=1}^n z_k=2^{n-1}$, then 
	\begin{equation}\label{Precupanuproduct}
	\left |\bigg\langle \left(\prod_{k=1}^n z_kS_{\mathcal{Z}_k}\right)a, b\bigg\rangle- \frac12\langle a, b\rangle \right|\leq \frac12 \|a\| \|b\|.
	\end{equation}
% If in \eqref{buzanosuma}, we consider that each $\mathcal{Z}_k$ is a singleton set for any $k=1, \cdots, n$, then 

\begin{corollary}
	Let $\mathcal{Z}_1, \cdots, \mathcal{Z}_n$ be finite subsets of nonzero vectors in $\mathcal{X}$ and $z_1, \cdots, z_n$ complex numbers such that $\sum_{k=1}^n |z_k|=\sum_{k=1}^n z_k=1.$ Then, for any $a, b \in \mathcal{X}$   hold 
	
	\item \begin{equation}
	\left |\bigg\langle \left(\sum_{k=1}^n z_kS_{\mathcal{Z}_k}\right)a, b\bigg \rangle -\frac12\langle a, b\rangle \right|\leq \frac{1}{2} \|a\| \|b\|.
	\end{equation}
\end{corollary}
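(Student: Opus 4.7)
The plan is to reduce the statement directly to inequality \eqref{buzanosuma} of Theorem \ref{refCScontraction}. The key observation is that the hypothesis $\sum_{k=1}^n z_k = 1$ allows us to absorb the scalar term $\tfrac{1}{2}\langle a,b\rangle$ into the operator sum. Writing $\tfrac{1}{2}\langle a,b\rangle = \sum_{k=1}^n \tfrac{z_k}{2}\langle a,b\rangle$, I would rearrange
\begin{equation*}
\bigg\langle \sum_{k=1}^n z_k S_{\mathcal{Z}_k} a, b\bigg\rangle - \tfrac{1}{2}\langle a,b\rangle = \bigg\langle \sum_{k=1}^n z_k\Big(S_{\mathcal{Z}_k} - \tfrac{1}{2}I\Big)a, b\bigg\rangle,
\end{equation*}
so the left-hand side of the target inequality becomes exactly the quantity bounded in \eqref{buzanosuma}.

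Applying inequality \eqref{buzanosuma} then yields the bound $\tfrac{1}{2}\left(\sum_{k=1}^n |z_k|\right)\|a\|\|b\|$, and the second hypothesis $\sum_{k=1}^n |z_k| = 1$ collapses this to the desired $\tfrac{1}{2}\|a\|\|b\|$. If one prefers a fully self-contained argument avoiding the reference to Theorem \ref{refCScontraction}, the same bound can be extracted directly: by the triangle inequality for the finite sum, the Cauchy--Schwarz inequality, and the estimate $\|S_{\mathcal{Z}_k}-\tfrac{1}{2}I\| \leq \tfrac{1}{2}$ provided by Lemma \ref{previousresults}(2), each summand is controlled by $\tfrac{|z_k|}{2}\|a\|\|b\|$, and summation closes the estimate.

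There is no substantive obstacle: the result is essentially a convex-combination packaging of Theorem \ref{generalizacion}. In fact, the joint condition $\sum_{k=1}^n z_k = \sum_{k=1}^n |z_k| = 1$ forces every $z_k$ to be a nonnegative real number (this is the equality case of the scalar triangle inequality in $\mathbb{C}$), so the $z_k$ form a probability vector and $\sum_{k=1}^n z_k S_{\mathcal{Z}_k}$ is a genuine convex combination of Selberg operators. The only care needed is to note that both hypotheses are used simultaneously at the decisive step, and that the inequality $\|2S_{\mathcal{Z}_k}-I\|\leq 1$ from Lemma \ref{previousresults} is preserved under convex combinations, which is precisely what makes the packaging work.
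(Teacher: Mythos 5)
Your proposal is correct and follows essentially the same route as the paper: use $\sum_{k=1}^n z_k=1$ to rewrite $\left(\sum_{k=1}^n z_kS_{\mathcal{Z}_k}\right)-\tfrac12 I$ as $\sum_{k=1}^n z_k\left(S_{\mathcal{Z}_k}-\tfrac12 I\right)$, apply inequality \eqref{buzanosuma}, and invoke $\sum_{k=1}^n |z_k|=1$. The additional observation that the two hypotheses force the $z_k$ to be a probability vector is accurate but not needed for the argument.
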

\begin{proof}
	By the hypothesis $\sum_{k=1}^n z_k=1$ we get 
	\begin{equation}
	\left(\sum_{k=1}^n z_kS_{\mathcal{Z}_k}\right) -\frac12 I= \sum_{k=1}^n z_k\left(S_{\mathcal{Z}_k}-\frac12I\right).
	\end{equation}
	Then, for any $a, b\in \mathcal{X}$, we have as consequence of \eqref{buzanosuma}
	\begin{eqnarray}
	\left |\bigg\langle \left(\sum_{k=1}^n z_kS_{\mathcal{Z}_k}\right)a, b\bigg \rangle -\frac12\langle a, b\rangle \right|&=& \left |\bigg\langle \sum_{k=1}^n z_k\left(S_{\mathcal{Z}_k}-\frac12I\right)a, b\bigg\rangle\right| \nonumber \\&\leq& \frac{\sum_{k=1}^n |z_k|}{2} \|a\| \|b\|= \frac{1}{2} \|a\| \|b\|.
	\end{eqnarray}
\end{proof}

As consequence of the previous result, we attain  a generalization of Buzano's inequality. 

\begin{proposition}
	Let $\mathcal{Z}_1, \cdots, \mathcal{Z}_n$ be finite subsets of nonzero vectors in $\mathcal{X}$ and $z_1, \cdots, z_n$ complex numbers such that $\sum_{k=1}^n |z_k|=\sum_{k=1}^n z_k=1.$ Then, for any $x, y \in \mathcal{X}$   hold 
	
	\item \begin{equation}
	\left |\bigg\langle \left(\sum_{k=1}^n z_kS_{\mathcal{Z}_k}\right)a, b\bigg \rangle  \right|\leq \frac{1}{2}(|\langle a, b\rangle|+ \|a\| \|b\|).
	\end{equation}
\end{proposition}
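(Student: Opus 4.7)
The plan is to derive this Buzano-type bound as an immediate corollary of the preceding Corollary, exactly in the spirit of how Buzano's inequality is obtained from Richard's inequality via the continuity property of the modulus.

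First I would invoke the previous Corollary, which, under the very same hypotheses $\sum_{k=1}^n|z_k|=\sum_{k=1}^n z_k=1$, guarantees the Richard-type estimate
\[
\left|\bigg\langle\Big(\sum_{k=1}^n z_k S_{\mathcal{Z}_k}\Big)a,b\bigg\rangle-\tfrac12\langle a,b\rangle\right|\le\tfrac12\|a\|\|b\|.
\]
Then, writing
\[
\bigg\langle\Big(\sum_{k=1}^n z_k S_{\mathcal{Z}_k}\Big)a,b\bigg\rangle=\bigg[\bigg\langle\Big(\sum_{k=1}^n z_k S_{\mathcal{Z}_k}\Big)a,b\bigg\rangle-\tfrac12\langle a,b\rangle\bigg]+\tfrac12\langle a,b\rangle,
\]
I would apply the ordinary triangle inequality for the modulus in $\mathbb{C}$ to split the absolute value into two pieces. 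The first piece is bounded by $\tfrac12\|a\|\|b\|$ by the Corollary, while the second is exactly $\tfrac12|\langle a,b\rangle|$. Adding these yields the claimed bound $\tfrac12(|\langle a,b\rangle|+\|a\|\|b\|)$.

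There is essentially no obstacle here: all the analytic work has already been carried out in Theorem \ref{refCScontraction} and its Corollary, where the operator identity $\sum z_k S_{\mathcal{Z}_k}-\tfrac12 I=\sum z_k(S_{\mathcal{Z}_k}-\tfrac12 I)$ (valid precisely because $\sum z_k=1$) and the norm bound $\|2S_{\mathcal{Z}_k}-I\|\le 1$ from Lemma \ref{previousresults} combine with $\sum|z_k|=1$ to give the Richard-type control. The present proposition simply transfers that control from the centered expression to the uncentered one by the triangle inequality, which is the same mechanism that turns Richard's inequality \eqref{7} into Buzano's inequality \eqref{5}. Accordingly, the proof need only read: ``by the previous Corollary and the triangle inequality,'' followed by the two-line computation above.
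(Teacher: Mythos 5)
Your proposal is correct and coincides with the paper's intended argument: the paper introduces this proposition with the words ``as consequence of the previous result,'' meaning precisely the deduction you describe, namely applying the triangle inequality to the decomposition $\langle(\sum_k z_k S_{\mathcal{Z}_k})a,b\rangle=[\langle(\sum_k z_k S_{\mathcal{Z}_k})a,b\rangle-\tfrac12\langle a,b\rangle]+\tfrac12\langle a,b\rangle$ and invoking the Richard-type bound of the preceding corollary. No gap; this is the same route.
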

\begin{theorem}
In an Hilbert space $\mathcal{X}$ over the
field of complex numbers $\mathbb{C}$, we have
\begin{equation}
\|\alpha S_{\mathcal{Z}}-\beta I\|\leq\max\{|\beta|,|\alpha-\beta|\}  \label{16.3}
\end{equation}
for all $a,b,x\in \mathcal{X}$, $\mathcal{Z}=\{x\}$, $x\neq 0$, and for every $\alpha,\beta \in \mathbb{C}$.
\end{theorem}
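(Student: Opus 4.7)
The plan is to recognize that inequality \eqref{16} already does essentially all of the work: once we identify the matrix coefficient $\langle (\alpha S_{\mathcal{Z}} - \beta I) a, b\rangle$ with the quantity appearing on the left-hand side of \eqref{16} (divided by $\|x\|^2$), taking the supremum over unit vectors $a,b$ yields the operator norm bound immediately.

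First I would spell out the Selberg operator when $\mathcal{Z}=\{x\}$: since $\sum_j|\langle x,z_j\rangle|=\|x\|^2$, we have $S_{\mathcal{Z}} = \dfrac{x\otimes x}{\|x\|^2}$, so that for any $a\in \mathcal{X}$,
\[
S_{\mathcal{Z}}a = \frac{\langle a,x\rangle}{\|x\|^2}\,x.
\]
Consequently, for any $a,b\in\mathcal{X}$,
\[
\langle (\alpha S_{\mathcal{Z}} - \beta I)a,b\rangle = \alpha\,\frac{\langle a,x\rangle\langle x,b\rangle}{\|x\|^2} - \beta\langle a,b\rangle = \frac{1}{\|x\|^2}\bigl(\alpha\langle a,x\rangle\langle x,b\rangle - \beta\|x\|^2\langle a,b\rangle\bigr).
\]

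Next, I would invoke inequality \eqref{16} directly on the right-hand side, which gives
\[
\bigl|\langle (\alpha S_{\mathcal{Z}} - \beta I)a,b\rangle\bigr| \leq \frac{1}{\|x\|^2}\cdot \max\{|\beta|,|\alpha-\beta|\}\,\|x\|^2\|a\|\|b\| = \max\{|\beta|,|\alpha-\beta|\}\,\|a\|\|b\|.
\]
Finally, using the standard characterization of the operator norm,
\[
\|\alpha S_{\mathcal{Z}} - \beta I\| = \sup_{\|a\|=\|b\|=1}\bigl|\langle (\alpha S_{\mathcal{Z}} - \beta I)a,b\rangle\bigr|,
\]
I take the supremum over $\|a\|=\|b\|=1$ in the previous line, which yields exactly \eqref{16.3}.

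There is no real obstacle here; the proof is essentially a reformulation of Theorem 2.1 in the operator-theoretic language introduced in Section 3. The only substantive observation is the identification of the singleton Selberg operator with the rank-one operator $\|x\|^{-2}(x\otimes x)$, after which the result is a one-line consequence of \eqref{16} and the definition of the operator norm.
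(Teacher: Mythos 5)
Your proposal is correct and follows essentially the same route as the paper: the paper likewise deduces $|\langle(\alpha S_{\mathcal{Z}}-\beta I)a,b\rangle|\leq\max\{|\beta|,|\alpha-\beta|\}\,\|a\|\,\|b\|$ from inequality \eqref{16} and then takes the supremum over unit vectors $a,b$. Your explicit identification $S_{\mathcal{Z}}=\|x\|^{-2}(x\otimes x)$ simply spells out a step the paper leaves implicit.
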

\begin{proof}
From inequality \eqref{16} we deduce
\begin{equation}
|\langle \left(\alpha S_{\mathcal{Z}}-\beta I\right)a,b\rangle|\leq\max\{|\beta|,|\alpha-\beta|\}\|a\|\|b\| . \label{16.1}
\end{equation}
Taking  the supremum in above relation for $\|a\|=\|b\|=1$, we find the inequality of the statement.
\end{proof}

%%%%%%%%%%%%%%%%%%%%%%%%%%%%%%%%%%%%%%%%%%%%%%%%%%%%%%%%%%%%%%%%%%%%%%%%%%%%%%%%%%%%%%%%%%%%%%%%%%%%%%%%%%%%%%%%%%%%%%%%%%%%%%%%
\subsection*{Declarations}
\begin{itemize}
\item {\bf{Availability of data and materials}}: Not applicable.
\item {\bf{Competing interests}}: The authors declare that they have no competing interests.
\item {\bf{Funding}}: Not applicable.
\item {\bf{Authors' contributions}}: Authors declare that they have contributed equally to this paper. All authors have read and approved this version.
\end{itemize}
%%%%%%%%%%%%%%%%%%%%%%%%%%%%%%%%
%===================================================================================================================================\;(\text{by Lemma }\ref{carty2})\\
\bibliographystyle{amsplain}

\end{document}